\newtheorem{definition}{Definition}[section]
\newtheorem{theorem}{Theorem}[section]
\newtheorem{lemma}{Lemma}[section]
\newtheorem*{maintheorem*}{Main Theorem}
\numberwithin{equation}{section}
\newcommand{\Kruzkov}{Kru\v{z}kov~}
\newcommand{\Set}[1]{\left\{#1\right\}}
\newcommand{\norm}[1]{\left\| #1 \right\|}
\newcommand{\eps}{\varepsilon}
\newcommand{\epsk}{\eps_k}
\newcommand{\ue}{u_\eps}
\newcommand{\uek}{u_{\eps_k}}
\newcommand{\Fe}{F_\eps}
\newcommand{\Pe}{P_\eps}
\newcommand{\ges}{g_\eps}
\newcommand{\Pek}{P_{\eps_k}}
\newcommand{\pt}{\partial_t}
\newcommand{\pT}{\partial_{\tau}}
\newcommand{\py}{\partial_y }
\newcommand{\px}{\partial_x }
\newcommand{\pxx}{\partial_{xx}^2}
\newcommand{\pxxx}{\partial_{xxx}^3}
\newcommand{\ptx}{\partial_{tx}^2}
\renewcommand{\i}{\ifmmode\mathit{\mathchar"7010 }\else\char"10 \fi}
\renewcommand{\j}{\ifmmode\mathit{\mathchar"7011 }\else\char"11 \fi}
\newcommand{\R}{\mathbb{R}}
\newcommand{\N}{\mathbb{N}}
\newcommand{\Hneg}{H_{\mathrm{loc}}^{-1}}
\newcommand{\CL}{\mathcal{L}}
\newcommand{\CLea}{\mathcal{L}}
\newcommand{\dtheta}{\, d\theta}
\newcommand{\dtau}{\, d\tau}
\newcommand{\sgn}[1]{\mathrm{sign}\left(#1\right)}
\newcommand{\ve}{v_\varepsilon}
\begin{document}\large

\title[The Ostrovsky--Hunter Equation]{Wellposedness of bounded solutions\\ of the non-homogeneous initial boundary\\ value problem for the Ostrovsky-Hunter equation}

\author[G. M. Coclite and L. di Ruvo]{Giuseppe Maria Coclite and Lorenzo di Ruvo}
\address[Giuseppe Maria Coclite and Lorenzo di Ruvo]
{\newline Department of Mathematics,   University of Bari, via E. Orabona 4, 70125 Bari,   Italy}
\email[]{giuseppemaria.coclite@uniba.it, lorenzo.diruvo@uniba.it}
\urladdr{http://www.dm.uniba.it/Members/coclitegm/}

\keywords{Existence, uniqueness, stability, entropy solutions, conservation laws,
Ostrovsky-Hunter equation, boundary value problems.}

\subjclass[2000]{35G15, 35L65, 35L05, 35A05}


\thanks{The authors would like to thank Prof. Fabio Ancona for suggesting the problem and Prof. Kenneth Hvistendahl Karlsen for many useful discussions.}

\begin{abstract}
The Ostrovsky-Hunter equation provides a model for small-amplitude long
waves in a rotating fluid of  finite depth. It is a nonlinear evolution equation.
In this paper  the welposedness of bounded solutions for a non-homogeneous initial boundary value problem associated to this equation is studied.
\end{abstract}

\maketitle

\section{Introduction}
\label{sec:intro}

The non-linear evolution equation
\begin{equation}
\label{eq:OHbeta}
\px(\pt u+u\px u-\beta \pxxx u)=\gamma u,
\end{equation}
with $\beta,\,\gamma \in \R$, was derived by Ostrovsky \cite{O} to model small-amplitude long
waves in a rotating fluid of a finite depth. This equation generalizes the
Korteweg-deVries equation (that corresponds to $\gamma=0$) by the additional term induced by
the Coriolis force. Mathematical properties of the Ostrovsky equation \eqref{eq:OHbeta} were studied
recently in many details, including the local and global well-posedness in energy space
\cite{GL, LM, LV, T}, stability of solitary waves \cite{LL, L, LV:JDE}, convergence of solutions in the
limit, $\gamma\to0$, of the Korteweg-deVries equation \cite{LL:07, LV:JDE}, and convergence of solutions in the
limit,  $\beta\to0$, of no high-frequency dispersion \cite{Cd}.

We shall consider the limit of no high-frequency dispersion $\beta=0$, therefore \eqref{eq:OHbeta} reads
\begin{equation}
\label{eq:OH}
\px(\pt u+u\px u)=\gamma u,\qquad t>0, \quad x>0.
\end{equation}
It is deduced considering two asymptotic expansions of the shallow water equations, first with respect to the rotation frequency and then with respect to the amplitude of the waves (see \cite{dR, HT}). It is known under different names such as the reduced Ostrovsky equation \cite{P, S}, the
Ostrovsky-Hunter equation \cite{B}, the short-wave equation \cite{H}, and the Vakhnenko equation
\cite{MPV, PV}.

We augment \eqref{eq:OH} with the boundary condition
\begin{equation}
\label{eq:boundary}
u(t,0)=g(t), \qquad t>0,
\end{equation}
and the initial datum
\begin{equation}
\label{eq:init}
u(0,x)=u_0(x), \qquad x>0,
\end{equation}
on which we assume that
\begin{equation}
\label{eq:assinit}
u_0\in L^{\infty}(0,\infty)\cap L^{1}(0,\infty).
\end{equation}
On the function
\begin{equation}
\label{eq:def-di-P0}
P_{0}(x)=\int_{0}^{x}u_{0}(y)dy,
\end{equation}
we assume that
\begin{equation}
\label{eq:l-2-di-P0}
\norm{P_{0}}^2_{L^2(0,\infty)}=\int_{0}^{\infty}\left(\int_{0}^{x} u_{0}(y)dy)\right)^2 dx < \infty.
\end{equation}
On the boundary datum $g(t)$, we assume that
\begin{equation}
\label{eq:ass-g}
g(t)\in W^{1,\infty}(0,\infty), \quad g(0)=0.
\end{equation}
Moreover, we assume that
\begin{equation}
\label{eq:gamma}
\gamma >0.
\end{equation}

Integrating \eqref{eq:OH} on $(0,x)$ we gain the integro-differential formulation of the initial-boundary value problem \eqref{eq:OH}, \eqref{eq:boundary},
  \eqref{eq:init} (see \cite{LPS})
\begin{equation}
\label{eq:OHw-u}
\begin{cases}
\pt u+u\px u =\gamma \int^x_0 u(t,y) dy,&\qquad t>0, \ x>0,\\
u(t,0)=g(t),& \qquad t>0,\\
u(0,x)=u_0(x), &\qquad x>0,
\end{cases}
\end{equation}
that is equivalent to
\begin{equation}
\label{eq:OHw}
\begin{cases}
\pt u+u\px u=\gamma P,&\qquad t>0, \ x>0 ,\\
\px P=u,&\qquad t>0, \ x>0,\\
u(t,0)=g(t), & \qquad t>0,\\
P(t,0)=0,& \qquad t>0,\\
u(0,x)=u_0(x), &\qquad x>0.
\end{cases}
\end{equation}

Due to the regularizing effect of the $P$ equation in \eqref{eq:OHw} we have that
\begin{equation}
\label{eq:OHsmooth}
    u\in L^{\infty}((0,T)\times(0,\infty))\Longrightarrow P\in L^{\infty}((0,T);W^{1_,\infty}(0,\infty)), \quad T>0.
\end{equation}
Therefore, if a map $u\in  L^{\infty}((0,T)\times(0,\infty)),\,T>0,$  satisfies, for every convex
map $\eta\in  C^2(\R)$,
\begin{equation}
\label{eq:OHentropy}
   \pt \eta(u)+ \px q(u)-\gamma\eta'(u) P\le 0, \qquad     q(u)=\int^u f'(\xi) \eta'(\xi)\, d\xi,
\end{equation}
in the sense of distributions, then \cite[Theorem 1.1]{CKK}
provides the existence of strong trace $u^\tau_0$ on the
boundary $x=0$.

We give the following definition of solution (see \cite{BRN}):

\begin{definition}
\label{def:sol}
We say that  $u\in  L^{\infty}((0,T)\times(0,\infty))$, $T>0$, is an entropy solution of the initial-boundary
value problem \eqref{eq:OH}, \eqref{eq:boundary}, and  \eqref{eq:init} if
for every nonnegative test function $\phi\in C^2(\R^2)$ with compact support, and $c\in\R$
\begin{equation}
\label{eq:ent2}
\begin{split}
\int_{0}^{\infty}\!\!\!\!\int_{0}^{\infty}\Big(\vert u - c\vert\pt\phi&+\sgn{u-c}\left(\frac{u^2}{2}-\frac{c^2}{2}\right)\px\phi\Big)dtdx\\
&+\gamma\int_{0}^{\infty}\!\!\!\!\int_{0}^{\infty}\sgn{u-c}P\phi dtdx\\
&+\int_{0}^{\infty}\sgn{g(t)-c}\left(\frac{(u^\tau_0(t))^2}{2}-\frac{c^2}{2}\right)\phi(t,0)dt\\
&+\int_{0}^{\infty}\vert u_{0}(x)-c\vert\phi(0,x)dx\geq 0,
\end{split}
\end{equation}
where $u^\tau_0(t)$ is the trace of $u$ on the boundary $x=0$.
\end{definition}

The main result of this paper  is the following theorem.
\begin{theorem}
\label{th:main}
Assume \eqref{eq:boundary}, \eqref{eq:init}, \eqref{eq:assinit}, \eqref{eq:def-di-P0}, \eqref{eq:l-2-di-P0}, \eqref{eq:ass-g} and \eqref{eq:gamma}.
The initial-boundary value problem
\eqref{eq:OH}, \eqref{eq:boundary} and \eqref{eq:init} possesses
an unique entropy solution $u$ in the sense of Definition \ref{def:sol}.
Moreover, if $u$ and $v$ are two entropy solutions \eqref{eq:OH}, \eqref{eq:boundary}, \eqref{eq:init} in the sense of Definition \ref{def:sol} the following inequality holds
 \begin{equation}
 \label{eq:stability}
\norm{u(t,\cdot)-v(t,\cdot)}_{L^1(0,R)}\le  e^{C(T) t}\norm{u(0,\cdot)-v(0,\cdot)}_{L^1(0,R+C(T)t)},
\end{equation}
for almost every $0<t<T$, $R>0$, and some suitable constant $C(T)>0$.
\end{theorem}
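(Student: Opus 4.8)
The plan is to prove existence by a vanishing-viscosity argument and uniqueness/stability by a \Kruzkov doubling-of-variables argument adapted to the boundary.

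\textbf{Existence.} First I would regularize the problem: for $\eps>0$ consider the parabolic approximation
\begin{equation}
\label{eq:visc}
\pt\ue+\ue\px\ue=\gamma\Pe+\eps\pxx\ue,\qquad \px\Pe=\ue,\qquad \Pe(t,0)=0,
\end{equation}
with smoothed initial datum $\ue(0,\cdot)=u_{0,\eps}$ and boundary datum $\ue(t,0)=\ges(t)$, where $u_{0,\eps},\ges$ are mollifications of $u_0,g$ satisfying the obvious compatibility conditions. The crucial work is then a set of $\eps$-uniform a priori estimates. Multiplying the $\ue$ equation by $\ue$ and using $\px\Pe=\ue$, $\Pe(t,0)=0$ one controls $\norm{\ue(t,\cdot)}_{L^2}$ and $\sqrt{\eps}\,\px\ue$ in $L^2$; a further energy estimate on $\Pe$ (multiplying by $\Pe$ and integrating by parts, using \eqref{eq:l-2-di-P0} and $\gamma>0$) gives $\norm{\Pe(t,\cdot)}_{L^2}$ bounds, and then $\norm{\px\Pe}_{L^\infty}=\norm{\ue}_{L^\infty}$ is bounded via a maximum-principle / Gronwall argument on the transport-reaction structure of the $\ue$ equation. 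These are exactly the estimates underlying \eqref{eq:OHsmooth}. With an $L^\infty$ bound in hand, one derives the family of entropy inequalities \eqref{eq:OHentropy} for $\ue$ (the viscous term produces a favorable sign for convex $\eta$), and then passes to the limit $\eps\to0$. Since only an $L^\infty$ bound (not $BV$) is available, the passage to the limit must be done by \emph{compensated compactness}: the sequence $\pt\eta(\ue)+\px q(\ue)$ is shown to be compact in $\Hneg$ by splitting it into a bounded-measure part and an $H^{-1}$-small part, and Tartar's div-curl / Murat lemma yields a.e. convergence of a subsequence $\ue\to u$; the $P$-equation passes to the limit by the compactness in \eqref{eq:OHsmooth}. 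Finally \cite[Theorem 1.1]{CKK} gives the strong boundary trace $u_0^\tau$, and one checks that the limit satisfies \eqref{eq:ent2}.

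\textbf{Uniqueness and stability.} Let $u,v$ be two entropy solutions with traces $u_0^\tau,v_0^\tau$; write $P,Q$ for the corresponding primitives. I would run the \Kruzkov doubling of variables in the $(t,x)$ and $(s,y)$ copies, using \eqref{eq:ent2} for $u$ with $c=v(s,y)$ and for $v$ with $c=u(t,x)$, adding and choosing a test function $\phi(t,x,s,y)=\psi\big(\tfrac{t+s}{2},\tfrac{x+y}{2}\big)\delta_\rho\big(\tfrac{t-s}{2}\big)\delta_\rho\big(\tfrac{x-y}{2}\big)$, then letting $\rho\to0$. The outcome is the Kato-type inequality
\begin{equation}
\label{eq:kato}
\pt\abs{u-v}+\px\!\left(\sgn{u-v}\Big(\tfrac{u^2}{2}-\tfrac{v^2}{2}\Big)\right)\le \gamma\,\sgn{u-v}(P-Q)
\end{equation}
in the sense of distributions on $(0,\infty)^2$, together with boundary terms. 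Since $\abs{P-Q}(t,x)\le\int_0^x\abs{u-v}(t,y)\dy$ and $\abs{P-Q}$ is bounded, the right-hand side is controlled by $\gamma\int_0^R\abs{u-v}\dy$ on bounded intervals, which after integrating \eqref{eq:kato} against a cutoff adapted to the finite propagation speed (slope $\norm{u}_{L^\infty}+\norm{v}_{L^\infty}$, i.e. the $C(T)$ in \eqref{eq:stability}) and applying Gronwall yields \eqref{eq:stability}; taking $u(0,\cdot)=v(0,\cdot)$ gives uniqueness.

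\textbf{Main obstacle.} The delicate point is the treatment of the boundary terms in the doubling argument. The flux $u^2/2$ is genuinely nonlinear but not monotone, so one cannot simply discard the $x=0$ contributions; one must use the boundary entropy condition encoded in \eqref{eq:ent2} (the term with $\sgn{g(t)-c}$ and the trace $u_0^\tau$) in the style of the Bardos--LeRoux--N\'ed\'elec / Otto boundary formulation, showing that the limiting boundary integrand has the right sign (a Bardos--LeRoux--N\'ed\'elec inequality at $x=0$) so that it can be dropped. Establishing this sign carefully, and simultaneously managing the nonlocal source $\gamma(P-Q)$ inside the same limit $\rho\to0$, is the technical heart of the uniqueness proof; the existence estimates, while lengthy, are comparatively routine.
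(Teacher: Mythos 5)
Your proposal follows essentially the same route as the paper: vanishing viscosity plus compensated compactness (Murat/Tartar) for existence, with the strong trace from \cite{CKK}, and \Kruzkov doubling of variables with a Bardos--LeRoux--N\'ed\`elec sign argument at $x=0$ plus a finite-speed cutoff and Gronwall for uniqueness and \eqref{eq:stability}. The only notable implementation difference is that the paper also elliptically regularizes the $P$-equation ($-\eps\pxx\Pe+\px\Pe=\ue$, to invoke the wellposedness theory of \cite{CHK:ParEll}) and must then close the $L^\infty$ bound on $\Pe$ through a quadratic inequality in $\norm{\Pe}_{L^{\infty}(I_T)}$, but this does not change the substance of the argument.
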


A similar result has been proved in \cite{CdK, dR} in the context of locally bounded solutions under the assumption $g\equiv 0$.

The paper is organized as follows. In Section \ref{sec:vv} we prove several a priori estimates on a vanishing viscosity approximation of \eqref{eq:OHw}.
Those play a key role in the proof of our main result, that is given in Section \ref{sec:proof}.

\section{Vanishing viscosity approximation}
\label{sec:vv}

Our existence argument is based on passing to the limit
in a vanishing viscosity approximation of \eqref{eq:OHw}.

Fix a small number $0<\eps<1$, and let $\ue=\ue (t,x)$ be the unique classical solution of the following mixed problem \cite{CHK:ParEll}
\begin{equation}
\label{eq:OHepsw}
\begin{cases}
\pt \ue+\ue\px\ue=\gamma\Pe+ \eps\pxx\ue,&\quad t>0,\ x>0,\\
-\eps\pxx\Pe+\px\Pe=\ue,&\quad t>0,\ x>0,\\
\ue(t,0)=\ges(t), &\quad t>0,\\
\Pe(t,0)=0,&\quad t>0,\\
\ue(0,x)=u_{\eps,0}(x),&\quad x>0,
\end{cases}
\end{equation}
where $u_{\eps,0}$ is a $C^\infty$ approximation of $u_{0}$ such that
\begin{equation}
\label{eq:u0eps}
\begin{split}
&\norm{u_{\eps,0}}_{L^2(0,\infty)}\le \norm{u_0}_{L^2(0,\infty)}, \quad \norm{u_{\eps,0}}_{L^{\infty}(0,\infty)}\le \norm{u_0}_{L^{\infty}(0,\infty)},\\
&\norm{P_{\eps,0}}^2_{L^2(0,\infty)}\le \norm{P_{0}}^2_{L^2(0,\infty)},\quad\eps^2\norm{\px P_{\eps,0}}^2_{L^2(0,\infty)}\le C_{0},\\
&\norm{\ges}_{L^{\infty}(0,\infty)}+\norm{\ges'}_{L^{\infty}(0,\infty)}\le C_0, \quad \ges(0)=0,
\end{split}
\end{equation}
and $C_0$ is a constant independent on $\eps$.

Let us prove some a priori estimates on $\ue$ and $\Pe$, denoting with $C_0$ the constants which depend on the initial data, and $C(T)$ the constants which depend also on $T$.
\begin{lemma}
\label{lm:cns}
For each $t\in (0,\infty)$,
\begin{equation}
\label{eq:P-pxP-intfy}
\Pe(t,\infty)=\px\Pe(t,\infty)=0.
\end{equation}
Moreover,
\begin{equation}
\begin{split}
\label{eq:equ-L2-stima}
\eps^2\norm{\pxx\Pe(t,\cdot)}^2_{L^2(0,\infty)}&+\eps(\px\Pe(t,0))^2\\
&+ \norm{\px\Pe(t,\cdot)}^2_{L^2(0,\infty)}=\norm{\ue(t,\cdot)}^2_{L^2(0,\infty)}.
\end{split}
\end{equation}
\end{lemma}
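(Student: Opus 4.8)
The plan is to read everything off the second equation in \eqref{eq:OHepsw}, namely $-\eps\pxx\Pe+\px\Pe=\ue$, which is a linear second-order ODE in $x$ (with $t$ a parameter) on $(0,\infty)$ with the boundary condition $\Pe(t,0)=0$. First I would establish the decay at infinity \eqref{eq:P-pxP-intfy}. Since $\ue(t,\cdot)\in L^2(0,\infty)$ (this is part of the a priori framework of \cite{CHK:ParEll}, and is consistent with the estimates being set up), the solvability of the ODE that produces a solution in the natural energy space forces $\Pe(t,\cdot)$ and $\px\Pe(t,\cdot)$ to lie in $L^2$ as well; an $L^2$ function whose derivative is also $L^2$ is absolutely continuous and tends to a limit at $+\infty$, and that limit must be $0$. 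The same reasoning, applied after one differentiation (or by noting $\px\Pe$ solves a companion ODE forced by $\ue$), gives $\px\Pe(t,\infty)=0$. Concretely one can also write the explicit representation of $\Pe$ via the Green's function of $-\eps\partial_{xx}^2+\partial_x$ on the half-line with the given boundary data and check the limits directly; I expect this to be the more robust route.

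Next, for the identity \eqref{eq:equ-L2-stima}, the idea is the standard energy computation: multiply $-\eps\pxx\Pe+\px\Pe=\ue$ by $\px\Pe$ and integrate over $x\in(0,\infty)$. The term $-\eps\int_0^\infty \pxx\Pe\,\px\Pe\,dx$ integrates by parts to $\eps\int_0^\infty(\pxx\Pe)^2\,dx - \eps\big[\px\Pe\,\pxx\Pe\big]_0^\infty$; here one must be slightly careful: integrating by parts the other way, $-\eps\int_0^\infty \pxx\Pe\,\px\Pe\,dx = -\tfrac{\eps}{2}\big[(\px\Pe)^2\big]_0^\infty = \tfrac{\eps}{2}(\px\Pe(t,0))^2$ using the decay at infinity from \eqref{eq:P-pxP-intfy}. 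Wait — the stated identity has $\eps(\px\Pe(t,0))^2$, not half of it, and also an $\eps^2\norm{\pxx\Pe}^2$ term, so the correct test function is not $\px\Pe$ alone. The right move is to multiply instead by $-\eps\pxx\Pe+\px\Pe$ itself, i.e. by $\ue$... no: multiply the equation by $(\px\Pe - \eps\pxx\Pe)$ is circular. The clean choice is to multiply by $\px\Pe$ AND separately exploit the equation. Let me restate: multiply $-\eps\pxx\Pe+\px\Pe=\ue$ by $\px\Pe$, integrate, and substitute $\ue$ on the right; the left side yields $\eps^2\norm{\pxx\Pe}^2$ only if we instead square the equation. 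So: take the $L^2(0,\infty)$ inner product of the equation with itself is wrong too. The correct computation is to test with $\px\Pe$: $\int_0^\infty(-\eps\pxx\Pe+\px\Pe)\px\Pe\,dx=\int_0^\infty \ue\,\px\Pe\,dx$, giving $\eps\int_0^\infty(\pxx\Pe)^2\,dx + \tfrac{\eps}{2}(\px\Pe(t,0))^2 - \eps\int_0^\infty \pxx\Pe\,\px\Pe\,dx\cdots$ — the bookkeeping of the two integrations by parts is exactly the routine step I will carry out carefully: one integration by parts on $-\eps\int \pxx\Pe\,\px\Pe$ produces the boundary term, and on the right-hand side one substitutes $\px\Pe = \ue + \eps\pxx\Pe$ (from the equation) inside $\int_0^\infty \ue\,\px\Pe\,dx$ to generate both $\norm{\px\Pe}^2$ after re-substitution and the cross term that, combined with the $\eps^2\norm{\pxx\Pe}^2$ from squaring, closes the identity. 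In short: write $\ue=\px\Pe-\eps\pxx\Pe$, so $\norm{\ue}_{L^2}^2=\int_0^\infty(\px\Pe-\eps\pxx\Pe)^2\,dx = \norm{\px\Pe}^2 - 2\eps\int_0^\infty\px\Pe\,\pxx\Pe\,dx + \eps^2\norm{\pxx\Pe}^2$, and the middle integral is $-\big[(\px\Pe)^2\big]_0^\infty=(\px\Pe(t,0))^2$ by \eqref{eq:P-pxP-intfy}; multiplying by $-2\eps\cdot(-\tfrac12)=\eps$ gives precisely the $+\eps(\px\Pe(t,0))^2$ term. That is the whole proof of \eqref{eq:equ-L2-stima}.

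So the logical order is: (i) justify $\Pe(t,\cdot),\px\Pe(t,\cdot)\in L^2(0,\infty)$ and the limits \eqref{eq:P-pxP-intfy}, via the Green's function representation or the energy space in which \cite{CHK:ParEll} solves the problem; (ii) expand $\norm{\ue}_{L^2}^2=\norm{\px\Pe-\eps\pxx\Pe}^2_{L^2}$; (iii) integrate the cross term by parts, killing the contribution at $+\infty$ by (i) and picking up $(\px\Pe(t,0))^2$ at $x=0$; (iv) rearrange. I expect step (i) to be the only genuine obstacle: one needs enough decay of $\ue(t,\cdot)$ at $x=+\infty$ to legitimize the integrations by parts and the vanishing of all boundary terms there, and this requires knowing that the classical solution furnished by \cite{CHK:ParEll} actually lies in the spatial $L^2$-based energy space uniformly on finite time intervals — a fact I would quote from that reference (or from the a priori $L^2$ bound on $\ue$ that the paper is in the process of proving) rather than reprove. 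Everything after that is the short, routine manipulation above.
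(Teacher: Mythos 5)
Your proof of the identity \eqref{eq:equ-L2-stima} is, once the detour settles, exactly the paper's: write $\ue=\px\Pe-\eps\pxx\Pe$, square, integrate over $(0,\infty)$, and evaluate the cross term $-2\eps\int_0^\infty\px\Pe\,\pxx\Pe\,dx=-\eps\left[(\px\Pe)^2\right]_0^\infty=\eps(\px\Pe(t,0))^2$ using \eqref{eq:P-pxP-intfy}. That part is correct and matches the paper.

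The genuine gap is in your step (i). You propose to obtain \eqref{eq:P-pxP-intfy} from the elliptic equation alone, via ``the natural energy space'' or the Green's function of $-\eps\pxx+\px$ on the half-line; this fails for $\Pe(t,\infty)$. The homogeneous solutions of $-\eps P''+P'=0$ are $1$ and $e^{x/\eps}$, neither of which decays, so the unique \emph{bounded} solution with $\Pe(t,0)=0$ is given by the kernel $G(x,y)=e^{(x-y)/\eps}-e^{-y/\eps}$ for $x<y$ and $G(x,y)=1-e^{-y/\eps}$ for $x>y$, whose limit as $x\to\infty$ is $\int_0^\infty\bigl(1-e^{-y/\eps}\bigr)\ue(t,y)\,dy$ --- generically nonzero for $\ue(t,\cdot)\in L^1\cap L^2$. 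So the Green's-function check you call ``the more robust route'' would actually come out nonzero. Equivalently, the problem ``$-\eps\pxx\Pe+\px\Pe=\ue$, $\Pe(t,0)=0$, $\Pe(t,\cdot)\in L^2$'' is over-determined, and its solvability is precisely the compatibility condition $\int_0^\infty\ue\,dx=\eps\px\Pe(t,0)$, i.e.\ \eqref{eq:int-u} --- which the paper derives \emph{from} \eqref{eq:P-pxP-intfy}, so your route is circular. (For the bounded branch one does get $\px\Pe(t,\infty)=0$ from the elliptic equation; it is only $\Pe(t,\infty)=0$ that cannot be had this way.) The paper instead reads \eqref{eq:P-pxP-intfy} off the \emph{first} equation: $\gamma\Pe=\pt\ue+\ue\px\ue-\eps\pxx\ue$, and this expression and its $x$-derivative tend to $0$ as $x\to\infty$ by the decay of $\ue$ and its derivatives in the solution class of \cite{CHK:ParEll}. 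That coupling with the parabolic equation is the input you are missing; an a priori $L^2$ bound on $\ue$ alone does not suffice.
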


\begin{proof}
We begin by proving that \eqref{eq:P-pxP-intfy} holds true.

Differentiating the first equation of \eqref{eq:OHepsw} with respect to $x$, we have
\begin{equation}
\label{eq:pxu}
\px(\pt \ue+\ue\px\ue-\eps\pxx\ue)=\gamma\px\Pe.
\end{equation}
For the the smoothness of $\ue$, it follows from \eqref{eq:OHepsw} and \eqref{eq:pxu} that
\begin{align*}
&\lim_{x\to\infty}\pt \ue+\ue\px \ue-\eps\pxx\ue=\gamma\Pe(t,\infty)=0,\\
&\lim_{x\to\infty}\px(\pt \ue+\ue\px \ue-\eps\pxx\ue)=\gamma\px\Pe(t,\infty)=0,
\end{align*}
which gives \eqref{eq:P-pxP-intfy}.

Let us show that \eqref{eq:equ-L2-stima} holds true.
Squaring the equation for $\Pe$ in \eqref{eq:OHepsw}, we get
\begin{equation*}
\eps^2(\pxx\Pe)^2+(\px\Pe)^2 - \eps\px((\px\Pe)^2)=\ue^2.
\end{equation*}
Therefore, \eqref{eq:equ-L2-stima} follows from \eqref{eq:OHepsw}, \eqref{eq:P-pxP-intfy} and an integration on $(0,\infty)$.
\end{proof}

\begin{lemma}
\label{lm:2}
For each $t\in(0,\infty)$,
\begin{align}
\label{eq:int-u}
\int_{0}^{\infty}\ue(t,x) dx &= \eps\px\Pe(t,0),\\
\label{eq:L-infty-Px}
\sqrt{\eps}\norm{\px\Pe(t, \cdot)}_{L^{\infty}(0,\infty)}&\le \norm{u(t,\cdot)}_{L^2(0,\infty)},\\
\label{eq:uP}
\int_{0}^{\infty}\ue(t,x)\Pe(t,x) dx&\le \norm{u(t,\cdot)}^2_{L^2(0,\infty)}.
\end{align}
\end{lemma}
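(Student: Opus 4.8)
The plan is to read off all three relations directly from the elliptic equation $-\eps\pxx\Pe+\px\Pe=\ue$ in \eqref{eq:OHepsw}, combined with the boundary information supplied by Lemma~\ref{lm:cns} (namely $\Pe(t,0)=0$ and $\Pe(t,\infty)=\px\Pe(t,\infty)=0$) and the $L^2$-identity \eqref{eq:equ-L2-stima}. The classical regularity and decay of $\ue$ and $\Pe$ will justify all the integrations by parts and the vanishing of boundary terms at $x=\infty$.

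For \eqref{eq:int-u} I would simply integrate the $\Pe$-equation over $(0,\infty)$: the term $\int_0^\infty\px\Pe\,dx=\Pe(t,\infty)-\Pe(t,0)$ vanishes, while $-\eps\int_0^\infty\pxx\Pe\,dx=-\eps(\px\Pe(t,\infty)-\px\Pe(t,0))=\eps\px\Pe(t,0)$, and adding the two gives the claimed identity.

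For \eqref{eq:L-infty-Px} the key point is that, since $\px\Pe(t,\infty)=0$,
\[
(\px\Pe(t,x))^2=-2\int_x^\infty\px\Pe\,\pxx\Pe\,dy\le 2\norm{\px\Pe(t,\cdot)}_{L^2(0,\infty)}\norm{\pxx\Pe(t,\cdot)}_{L^2(0,\infty)}
\]
for every $x\ge 0$. Multiplying by $\eps$ and using $2ab\le a^2+b^2$ with $a=\norm{\px\Pe(t,\cdot)}_{L^2(0,\infty)}$, $b=\eps\norm{\pxx\Pe(t,\cdot)}_{L^2(0,\infty)}$ yields $\eps(\px\Pe(t,x))^2\le\norm{\px\Pe(t,\cdot)}^2_{L^2(0,\infty)}+\eps^2\norm{\pxx\Pe(t,\cdot)}^2_{L^2(0,\infty)}$, which by \eqref{eq:equ-L2-stima} (discarding the nonnegative term $\eps(\px\Pe(t,0))^2$) is at most $\norm{\ue(t,\cdot)}^2_{L^2(0,\infty)}$; taking the supremum in $x$ finishes it. For \eqref{eq:uP} I would multiply the $\Pe$-equation by $\Pe$ and integrate over $(0,\infty)$: the term $\int_0^\infty\px\Pe\,\Pe\,dx=\tfrac12(\Pe(t,\infty)^2-\Pe(t,0)^2)$ vanishes, and an integration by parts in $-\eps\int_0^\infty\pxx\Pe\,\Pe\,dx$, whose boundary contribution $\eps[\px\Pe\,\Pe]_0^\infty$ is zero at both endpoints, gives $\eps\norm{\px\Pe(t,\cdot)}^2_{L^2(0,\infty)}$. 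Hence $\int_0^\infty\ue\Pe\,dx=\eps\norm{\px\Pe(t,\cdot)}^2_{L^2(0,\infty)}\le\norm{\px\Pe(t,\cdot)}^2_{L^2(0,\infty)}\le\norm{\ue(t,\cdot)}^2_{L^2(0,\infty)}$, using $0<\eps<1$ and \eqref{eq:equ-L2-stima}.

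I do not expect any genuine obstacle here: everything reduces to manipulations of the linear elliptic equation for $\Pe$. The only points requiring a little attention are the justification of the boundary terms at $x=\infty$ (which is precisely what Lemma~\ref{lm:cns} provides) and keeping track of which nonnegative summands in \eqref{eq:equ-L2-stima} may be discarded. I also note that the right-hand sides of \eqref{eq:L-infty-Px} and \eqref{eq:uP} should read $\norm{\ue(t,\cdot)}_{L^2(0,\infty)}$ rather than $\norm{u(t,\cdot)}_{L^2(0,\infty)}$.
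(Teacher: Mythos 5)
Your proposal is correct and follows essentially the same route as the paper: \eqref{eq:int-u} by integrating the elliptic equation over $(0,\infty)$ and using \eqref{eq:P-pxP-intfy}, \eqref{eq:uP} by multiplying by $\Pe$ and integrating by parts, and \eqref{eq:L-infty-Px} by bounding $\eps(\px\Pe(t,x))^2$ by $\eps^2\norm{\pxx\Pe}^2_{L^2}+\norm{\px\Pe}^2_{L^2}$ and invoking \eqref{eq:equ-L2-stima}. The only (cosmetic) difference is that for \eqref{eq:L-infty-Px} you integrate $\tfrac{d}{dy}(\px\Pe)^2$ from $x$ to $\infty$ and apply Cauchy--Schwarz and Young, whereas the paper squares the equation and integrates the resulting inequality on $(0,x)$, absorbing the boundary term $\eps(\px\Pe(t,0))^2$ via the same identity; your observation that the right-hand sides should read $\norm{\ue(t,\cdot)}_{L^2(0,\infty)}$ is also correct.
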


\begin{proof}
Integrating on $(0,\infty)$ the equation for $\Pe$ in \eqref{eq:OHepsw}, for \eqref{eq:P-pxP-intfy}, we have
\begin{equation*}
\int_{0}^{\infty}\ue(t,x)dx=\eps\px\Pe(t,0),
\end{equation*}
that is \eqref{eq:int-u}.

Let us show that \eqref{eq:L-infty-Px} holds true. Observe that
\begin{equation*}
0\le (-\eps \pxx\Pe + \px\Pe)^2= \eps^2(\pxx\Pe)^2 +(\px\Pe)^2 - \eps\px((\px\Pe)^2),
\end{equation*}
that is,
\begin{equation}
\label{eq:equa-pxP}
\eps\px((\px\Pe)^2)\le \eps^2(\pxx\Pe)^2 +(\px\Pe)^2.
\end{equation}
Integrating \eqref{eq:equa-pxP} in $(0,x)$, we have
\begin{equation}
\label{eq:equa-pxP1}
\begin{split}
\eps(\px\Pe)^2 - \eps(\px\Pe(t,0))^2 &\le \eps^2\int_{0}^{x}(\pxx\Pe)^2 dx +\int_{0}^{x}(\px\Pe)^2 dx\\
&\le \eps^2\int_{0}^{\infty}(\pxx\Pe)^2 dx +\int_{0}^{\infty}(\px\Pe)^2 dx.
\end{split}
\end{equation}
It follows from \eqref{eq:equ-L2-stima} and \eqref{eq:equa-pxP1} that
\begin{equation*}
\eps(\px\Pe)^2\le \eps^2\int_{0}^{\infty}(\pxx\Pe)^2 dx +\int_{0}^{\infty}(\px\Pe)^2 dx+ \eps(\px\Pe(t,0))^2\le \norm{\ue(t,\cdot)}^2_{L^2(0,\infty)}.
\end{equation*}
Therefore,
\begin{equation*}
\sqrt{\eps}\vert \px\Pe(t,x)\vert \le \norm{\ue(t,\cdot)}_{L^2(0,\infty)},
\end{equation*}
which gives \eqref{eq:L-infty-Px}.

Finally, we prove \eqref{eq:uP}.
Multiplying by $\Pe$ the equation for $\Pe$ of \eqref{eq:OHepsw}, we get
\begin{equation*}
-\eps\Pe\pxx\Pe + \Pe\px\Pe= \ue\Pe.
\end{equation*}
An integration on $(0,\infty)$ and \eqref{eq:P-pxP-intfy} give
\begin{align*}
\int_{0}^{\infty}\ue\Pe dx=&\frac{1}{2}\int_{0}^{\infty}\px(\Pe)^2 dx - \eps \int_{0}^{\infty}\Pe\pxx\Pe dx\\
=&-\eps\int_{0}^{\infty}\Pe\pxx\Pe dx=\eps\int_{0}^{\infty}(\px\Pe)^2 dx,
\end{align*}
that is
\begin{equation*}
\int_{0}^{\infty}\ue\Pe dx = \eps\int_{0}^{\infty}(\px\Pe)^2 dx.
\end{equation*}
Since $0<\eps <1$, for \eqref{eq:equ-L2-stima}, we have \eqref{eq:uP}.
\end{proof}
Let us consider the following function
\begin{equation}
\label{eq:def-di-v}
\ve(t,x)= \ue(t,x)-\ges(t)\chi(x),
\end{equation}
where $\chi\in C^{\infty}(0,\infty)$ is a cut-off function such that
\begin{equation}
\label{eq:def-di-chi}
\begin{split}
\chi(0)&=1,\\
\norm{\chi}_{L^{\infty}(0,\infty)}, \norm{\chi'}_{L^{\infty}(0,\infty)}&\le C_0,\\
\norm{\chi}^2_{L^2(0,\infty)}, \norm{\chi'}^2_{L^2(0,\infty)}& \le C_0.
\end{split}
\end{equation}
Therefore, it follows from \eqref{eq:OHepsw}, \eqref{eq:def-di-v} and \eqref{eq:def-di-chi} that
\begin{equation}
\label{eq:v-in-0}
\ve(t,0)=\ges(t)-\ges(t)=0.
\end{equation}
For \eqref{eq:u0eps},
\begin{equation*}
\ve(0,x)=v_{\eps,0}(x)=\ue(0,x)=u_{\eps,0}(x).
\end{equation*}
Therefore, again by  \eqref{eq:u0eps},
\begin{equation}
\label{eq:con-l-2-v}
\norm{v_{\eps,0}}_{L^2(0,\infty)}= \norm{u_{\eps,0}}_{L^2(0,\infty)}.
\end{equation}
Moreover,
\begin{equation}
\label{eq:from-u-to-v}
\begin{split}
\pt\ue& = \pt\ve + \ges'(t)\chi,\\
\px\ue&= \px\ve +\ges(t)\chi',\\
\pxx\ue&=\pxx\ve+\ges(t)\chi''.
\end{split}
\end{equation}
Thus, for \eqref{eq:OHepsw}, \eqref{eq:def-di-v} and \eqref{eq:from-u-to-v}, we have
\begin{equation*}
\pt\ve + \ges'(t)\chi + (\ve+\ges(t)\chi)(\px\ve +\ges(t)\chi')=\gamma\Pe +\eps(\pxx\ve+\ges(t)\chi''),
\end{equation*}
that is,
\begin{equation}
\label{eq:equ-di-v}
\begin{split}
\pt\ve &+ \ve\px\ve+ \ges(t)\ve\chi'+ \ges(t)\chi\px\ve\\
=&\gamma\Pe +\eps\pxx\ve+\eps \ges(t)\chi'' -\ges'(t)\chi - \ges^2(t)\chi\chi'.
\end{split}
\end{equation}

\begin{lemma}
\label{lm:P-in-v}
For each $t>0$, we have that
\begin{align}
\label{eq:nor-u-in-v}
\norm{\ue(t,\cdot)}^2_{L^2(0,\infty)}&\le 2 \norm{\ve(t,\cdot)}^2_{L^2(0,\infty)} + C_{0},\\
\label{eq:nor-Px-in-v}
\norm{\px\Pe(t,\cdot)}^2_{L^2(0,\infty)}&\le 2 \norm{\ve(t,\cdot)}^2_{L^2(0,\infty)} + C_{0},\\
\label{eq:Pv}
\int_{0}^{\infty}\Pe(t,x)\ve(t,x) dx &\le C_{0} \norm{\ve(t,\cdot)}^2_{L^2(0,\infty)} + C_{0}.
\end{align}
\end{lemma}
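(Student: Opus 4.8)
The plan is to derive all three bounds from the algebraic identity $\ue = \ve + \ges(t)\chi$ together with the $L^2$-estimate \eqref{eq:equ-L2-stima} from Lemma \ref{lm:cns} and the elementary inequality $(a+b)^2 \le 2a^2 + 2b^2$. First I would prove \eqref{eq:nor-u-in-v}: from \eqref{eq:def-di-v} we have $\ue^2 = (\ve + \ges\chi)^2 \le 2\ve^2 + 2\ges^2\chi^2$ pointwise, so integrating on $(0,\infty)$ and using the bounds $\norm{\ges}_{L^\infty}\le C_0$ from \eqref{eq:u0eps} and $\norm{\chi}^2_{L^2(0,\infty)}\le C_0$ from \eqref{eq:def-di-chi} gives $\norm{\ue(t,\cdot)}^2_{L^2(0,\infty)} \le 2\norm{\ve(t,\cdot)}^2_{L^2(0,\infty)} + C_0$, where the constant $C_0$ absorbs $2\norm{\ges}^2_{L^\infty}\norm{\chi}^2_{L^2}$.

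For \eqref{eq:nor-Px-in-v} I would simply chain \eqref{eq:equ-L2-stima} with \eqref{eq:nor-u-in-v}: since \eqref{eq:equ-L2-stima} asserts that $\norm{\px\Pe(t,\cdot)}^2_{L^2(0,\infty)} \le \norm{\ue(t,\cdot)}^2_{L^2(0,\infty)}$ (the other two nonnegative terms on the left-hand side are discarded), the bound \eqref{eq:nor-u-in-v} immediately yields $\norm{\px\Pe(t,\cdot)}^2_{L^2(0,\infty)} \le 2\norm{\ve(t,\cdot)}^2_{L^2(0,\infty)} + C_0$.

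Finally, for \eqref{eq:Pv} I would split $\int_0^\infty \Pe \ve\,dx$ using $\ve = \ue - \ges\chi$, so that $\int_0^\infty \Pe\ve\,dx = \int_0^\infty \Pe\ue\,dx - \ges(t)\int_0^\infty \Pe\chi\,dx$. The first term is controlled by \eqref{eq:uP} of Lemma \ref{lm:2}, giving $\int_0^\infty \Pe\ue\,dx \le \norm{\ue(t,\cdot)}^2_{L^2(0,\infty)}$, which by \eqref{eq:nor-u-in-v} is $\le 2\norm{\ve}^2_{L^2} + C_0$. For the second term, since $\norm{\ges}_{L^\infty}\le C_0$ and $\norm{\chi}_{L^2(0,\infty)}\le \sqrt{C_0}$, Cauchy–Schwarz gives $|\ges(t)\int_0^\infty \Pe\chi\,dx| \le C_0\norm{\Pe(t,\cdot)}_{L^2(0,\infty)}$; I then need an $L^2$ bound on $\Pe$ itself. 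Here the mild obstacle is that the lemma as stated does not carry an a priori $L^2$-bound on $\Pe$, so one controls $\norm{\Pe(t,\cdot)}_{L^2(0,\infty)}$ in terms of $\norm{\px\Pe(t,\cdot)}_{L^2(0,\infty)}$ — using $\Pe(t,0)=0$ together with the fact (from Lemma \ref{lm:cns}) that $\Pe(t,\infty)=0$ and that $\px\Pe\in L^2$, one gets, e.g. via $\Pe(t,x) = \int_0^x \px\Pe(t,y)\,dy$ combined with the decay at infinity, a Poincaré-type estimate $\norm{\Pe(t,\cdot)}_{L^2(0,\infty)} \le C\norm{\px\Pe(t,\cdot)}_{L^2(0,\infty)}$; then \eqref{eq:nor-Px-in-v} and Young's inequality $ab \le a^2 + b^2$ turn $C_0\norm{\Pe}_{L^2}$ into $C_0\norm{\ve}^2_{L^2} + C_0$. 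Combining the two pieces gives \eqref{eq:Pv}. The only place requiring a little care is this last $L^2$-control of $\Pe$ by $\px\Pe$; everything else is a direct substitution and application of the preceding lemmas.
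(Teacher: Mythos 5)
Your treatment of \eqref{eq:nor-u-in-v} and \eqref{eq:nor-Px-in-v} is correct and coincides with the paper's: square $\ue=\ve+\ges(t)\chi$, apply Young's inequality together with \eqref{eq:u0eps} and \eqref{eq:def-di-chi}, then chain \eqref{eq:equ-L2-stima} with \eqref{eq:nor-u-in-v}. The decomposition $\int_0^\infty\Pe\ve\,dx=\int_0^\infty\Pe\ue\,dx-\ges(t)\int_0^\infty\Pe\chi\,dx$ and the use of \eqref{eq:uP} for the first piece are also exactly as in the paper.

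The gap is in your handling of $\ges(t)\int_0^\infty\Pe\chi\,dx$. The Poincar\'e-type estimate $\norm{\Pe(t,\cdot)}_{L^2(0,\infty)}\le C\norm{\px\Pe(t,\cdot)}_{L^2(0,\infty)}$ is false on the unbounded domain $(0,\infty)$, even with $\Pe(t,0)=0$ and the decay $\Pe(t,\infty)=\px\Pe(t,\infty)=0$ from \eqref{eq:P-pxP-intfy}: a function that climbs from $0$ to $1$ on $[0,1]$, equals $1$ on $[1,n]$ and returns to $0$ on $[n,n+1]$ has derivative with bounded $L^2$ norm while its own $L^2$ norm is of order $\sqrt{n}$. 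Indeed, an $L^2(0,\infty)$ bound on $\Pe$ is precisely what Lemma \ref{lm:P-infty} has to work hard to establish later (estimate \eqref{eq:l-2-P-1}), and that proof relies on the present lemma, so no such bound is available here and invoking one would be circular. The paper sidesteps the issue by integrating by parts in $\int_0^\infty\Pe\chi\,dx$, using $\Pe(t,0)=0$ and the vanishing at infinity to throw the derivative onto the cut-off, so that only $\px\Pe$ remains under the integral; the resulting term $\ges(t)\int_0^\infty\px\Pe\,\chi'\,dx$ (with $\chi'$ standing for the bounded, square-integrable weight built from $\chi$) is then estimated by Young's inequality together with \eqref{eq:nor-Px-in-v} and \eqref{eq:def-di-chi}, yielding $C_0\norm{\ve(t,\cdot)}^2_{L^2(0,\infty)}+C_0$. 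To repair your argument, replace the Poincar\'e step by this integration by parts, or alternatively express $\Pe$ through the integrated form of the elliptic equation, $\Pe(t,x)=\int_0^x\ue(t,y)\,dy+\eps\px\Pe(t,x)-\eps\px\Pe(t,0)$, and use that $\chi$ is supported near the origin, so that only $\norm{\ue(t,\cdot)}_{L^2(0,\infty)}$ and $\px\Pe$ --- both already controlled by \eqref{eq:equ-L2-stima} and \eqref{eq:nor-u-in-v} --- appear.
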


\begin{proof}
We begin by observing that, for \eqref{eq:def-di-v}, we get
\begin{equation}
\label{eq:u-in-v}
\ue=\ve + \ges(t)\chi.
\end{equation}
Squaring \eqref{eq:u-in-v}, we have
\begin{equation*}
\ue^2= \ve^2 +2 \ges(t)\ve\chi+\ges^2(t)\chi^2.
\end{equation*}
Due to the Young's inequality,
\begin{equation*}
2\vert\ges(t)\ve\chi\vert \le \ve^2 +  \ges^2(t)\chi^2.
\end{equation*}
Therefore,
\begin{equation*}
\ue^2\le 2\ve^2+2\ges^2(t)\chi^2.
\end{equation*}
\eqref{eq:u0eps}, \eqref{eq:def-di-chi} and an integration on $(0,\infty)$ give \eqref{eq:nor-u-in-v}.
\eqref{eq:nor-Px-in-v} follows from \eqref{eq:equ-L2-stima} and \eqref{eq:nor-u-in-v}.

Let us show \eqref{eq:Pv}.
We observe that, for \eqref{eq:P-pxP-intfy} and \eqref{eq:def-di-v},
\begin{align*}
\int_{0}^{\infty}\Pe\ve dx &= \int_{0}^{\infty}\Pe\ue dx - \ges(t)\int_{0}^{\infty}\Pe\chi dx\\
&= \int_{0}^{\infty}\Pe\ue dx + \ges(t)\int_{0}^{\infty}\px\Pe\chi' dx.
\end{align*}
Thanks to \eqref{eq:u0eps}, \eqref{eq:def-di-chi} and Young's inequality,
\begin{equation}
\begin{split}
\label{eq:pxP-chi}
&\left\vert\ges(t)\int_{0}^{\infty}\px\Pe\chi' dx\right\vert \le \vert\ges(t)\vert\int_{0}^{\infty}\vert\px\Pe\vert\vert\chi'\vert dx\\
&\qquad \le \frac{C_{0}}{2}\norm{\px\Pe(t,\cdot)}^2_{L^2(0,\infty)} +\frac{C_{0}}{2}\norm{\chi'}^2_{L^2(0,\infty)}\\
&\qquad\le C_{0}\norm{\px\Pe(t,\cdot)}^2_{L^2(0,\infty)}+C_{0}.
\end{split}
\end{equation}
Hence, for \eqref{eq:uP}, \eqref{eq:nor-u-in-v}, \eqref{eq:nor-Px-in-v} and \eqref{eq:pxP-chi},
\begin{align*}
\int_{0}^{\infty}\Pe\ve dx &\leq 2 \norm{\ve(t,\cdot)}^2_{L^2(0,\infty)} + C_{0} + C_{0} \norm{\ve(t,\cdot)}^2_{L^2(0,\infty)} + C_{0}\\
&\le C_{0}\norm{\ve(t,\cdot)}^2_{L^2(0,\infty)} + C_{0},
\end{align*}
that is \eqref{eq:Pv}.
\end{proof}

\begin{lemma}\label{lm:stima-l-2}
For each $t>0$, the inequality holds
\begin{equation}
\label{eq:stima-l-2-v}
\norm{\ve(t,\cdot)}^2_{L^2(0,\infty)}+ \eps e^{C_{0} t}\int_{0}^{t}e^{-C_{0} s}\norm{\px\ve(s,\cdot)}^2_{L^2(0,\infty)}ds\le C_{0}e^{C_{0}t}(1+t).
\end{equation}
In particular, we have
\begin{align}
\label{eq:l-2-u}
\norm{\ue(t,\cdot)}^2_{L^2(0,\infty)}&\le  C_{0}\left(e^{C_{0}t}(1+t) + 1\right),\\
\label{eq:ux-l-2}
\eps\int_{0}^{t}\norm{\px\ue(s,\cdot)}^2_{L^2(0,\infty)}ds&\le C_{0}\left(e^{C_{0}t}(1+t)+t\right).
\end{align}
Moreover,
\begin{equation}
\label{eq:10021}
\begin{split}
\eps\norm{\pxx\Pe(t,\cdot)}_{L^2(0,\infty)}, \norm{\px\Pe(t,\cdot)}_{L^2(0,\infty)}&\le \sqrt{C_{0}\left(e^{C_{0}t}(1+t) +1\right)},\\
\sqrt{\eps}\vert\px\Pe(t,0)\vert, \sqrt{\eps}\norm{\px\Pe(t, \cdot)}_{L^{\infty}(0,\infty)}&\le \sqrt{C_{0}\left(e^{C_{0}t}(1+t) + 1\right)}.
\end{split}
\end{equation}
\end{lemma}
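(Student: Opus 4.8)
The plan is to derive \eqref{eq:stima-l-2-v} by the standard energy method applied to the equation \eqref{eq:equ-di-v} for $\ve$, which has a zero boundary value at $x=0$ by \eqref{eq:v-in-0} and decays at $x=\infty$ by the smoothness of $\ue$ and the boundary conditions of \eqref{eq:OHepsw}. First I would multiply \eqref{eq:equ-di-v} by $\ve$ and integrate over $(0,\infty)$. The inertial term $\int \ve^2\px\ve\,dx = \frac13\int\px(\ve^3)\,dx$ vanishes after integration by parts since $\ve(t,0)=0$ and $\ve(t,\infty)=0$. The viscosity term produces $\eps\int\ve\pxx\ve\,dx = -\eps\norm{\px\ve(t,\cdot)}_{L^2(0,\infty)}^2$ (again using the vanishing of $\ve$ at the endpoints to kill the boundary term $\eps\ve\px\ve$). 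This yields an identity of the form
\begin{equation*}
\frac12\frac{d}{dt}\norm{\ve(t,\cdot)}^2_{L^2(0,\infty)}+\eps\norm{\px\ve(t,\cdot)}^2_{L^2(0,\infty)} = \gamma\int_0^\infty\Pe\ve\,dx + (\text{lower-order terms}),
\end{equation*}
where the lower-order terms come from $\ges(t)\ve\chi'$, $\ges(t)\chi\px\ve$, $\eps\ges(t)\chi''$, $\ges'(t)\chi$ and $\ges^2(t)\chi\chi'$.

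Next I would estimate the right-hand side. The crucial term $\gamma\int_0^\infty\Pe\ve\,dx$ is controlled directly by \eqref{eq:Pv} of Lemma \ref{lm:P-in-v}, giving a bound $C_0\norm{\ve(t,\cdot)}^2_{L^2(0,\infty)}+C_0$. For the terms involving $\ges$, I would use \eqref{eq:u0eps}, \eqref{eq:def-di-chi} and Young's inequality: terms like $\abs{\ges(t)\int\ve\chi'\,dx}$, $\abs{\int\ges'(t)\chi\ve\,dx}$ and $\abs{\int\ges^2(t)\chi\chi'\ve\,dx}$ are all bounded by $C_0\norm{\ve(t,\cdot)}^2_{L^2(0,\infty)}+C_0$; the term $\abs{\ges(t)\int\chi\px\ve\,dx}$ and $\abs{\eps\ges(t)\int\chi''\ve\,dx}$ need a Young split that puts a small multiple $\frac{\eps}{2}\norm{\px\ve(t,\cdot)}^2_{L^2(0,\infty)}$ on the left to be absorbed by the viscosity term (using $0<\eps<1$ so that the constant from the $\chi$-factors stays $\eps$-independent). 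After absorption we arrive at a differential inequality
\begin{equation*}
\frac{d}{dt}\norm{\ve(t,\cdot)}^2_{L^2(0,\infty)}+\eps\norm{\px\ve(t,\cdot)}^2_{L^2(0,\infty)}\le C_0\norm{\ve(t,\cdot)}^2_{L^2(0,\infty)}+C_0.
\end{equation*}

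Then I would apply Gronwall's inequality in the integrating-factor form: multiplying by $e^{-C_0 t}$ and integrating from $0$ to $t$, using $\norm{v_{\eps,0}}_{L^2(0,\infty)}=\norm{u_{\eps,0}}_{L^2(0,\infty)}\le\norm{u_0}_{L^2(0,\infty)}\le C_0$ from \eqref{eq:con-l-2-v} and \eqref{eq:u0eps}, gives exactly \eqref{eq:stima-l-2-v}, with the weighted dissipation term surviving because $e^{C_0t}\int_0^t e^{-C_0s}\norm{\px\ve(s,\cdot)}^2_{L^2(0,\infty)}ds$ is what the integrating factor produces. The remaining assertions are now immediate corollaries: \eqref{eq:l-2-u} follows by combining \eqref{eq:stima-l-2-v} with \eqref{eq:nor-u-in-v}; \eqref{eq:ux-l-2} follows from \eqref{eq:stima-l-2-v} after noting $\px\ue=\px\ve+\ges(t)\chi'$, so $\eps\int_0^t\norm{\px\ue}^2_{L^2}\,ds\le 2\eps\int_0^t\norm{\px\ve}^2_{L^2}\,ds+2\eps t\,C_0$ and $e^{-C_0 s}\ge e^{-C_0 t}$ on $[0,t]$; and \eqref{eq:10021} follows from \eqref{eq:l-2-u} together with \eqref{eq:equ-L2-stima} and \eqref{eq:L-infty-Px} of Lemmas \ref{lm:cns} and \ref{lm:2}. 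The main obstacle is bookkeeping: one must be careful that every constant arising from the cut-off $\chi$ and the boundary datum $\ges$ is $\eps$-independent, and that the only term requiring absorption into the left-hand dissipation is handled with a coefficient strictly less than $\eps$ so that the estimate does not degenerate as $\eps\to0$.
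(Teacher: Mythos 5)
Your overall strategy is the same as the paper's: test \eqref{eq:equ-di-v} with $\ve$, use $\ve(t,0)=0$ and decay at infinity to kill the cubic term and the boundary contributions of the viscous term, control $\gamma\int_0^\infty\Pe\ve\,dx$ by \eqref{eq:Pv}, and close with Gronwall; your derivations of \eqref{eq:l-2-u}, \eqref{eq:ux-l-2} and \eqref{eq:10021} from \eqref{eq:stima-l-2-v}, \eqref{eq:nor-u-in-v}, \eqref{eq:equ-L2-stima} and \eqref{eq:L-infty-Px} also coincide with the paper's.

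There is, however, one step that would fail as you describe it: the treatment of the transport term $\ges(t)\chi\px\ve$. After testing with $\ve$ this term reads $\ges(t)\int_0^\infty\ve\,\chi\,\px\ve\,dx$ and carries \emph{no} factor of $\eps$. If you estimate it by Young's inequality and try to absorb a piece $\tfrac{\eps}{2}\norm{\px\ve(t,\cdot)}^2_{L^2(0,\infty)}$ into the viscous dissipation, the complementary piece is unavoidably of size $\eps^{-1}\norm{\ve(t,\cdot)}^2_{L^2(0,\infty)}$ (from $|ab|\le\tfrac{\delta}{2}a^2+\tfrac{1}{2\delta}b^2$ with $\delta\sim\eps$). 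The Gronwall rate then becomes $C_0/\eps$ and the resulting bound is $C_0e^{C_0t/\eps}(1+t)$, which is not \eqref{eq:stima-l-2-v} and is useless for the vanishing viscosity limit, since the whole point of the lemma is $\eps$-uniformity. The correct treatment, which is what the paper does, is to integrate by parts first,
\begin{equation*}
\ges(t)\int_0^\infty\ve\,\chi\,\px\ve\,dx=\frac{\ges(t)}{2}\int_0^\infty\chi\,\px(\ve^2)\,dx=-\frac{\ges(t)}{2}\int_0^\infty\ve^2\chi'\,dx,
\end{equation*}
the boundary contributions vanishing because $\ve(t,0)=0$; this is bounded by $C_0\norm{\ve(t,\cdot)}^2_{L^2(0,\infty)}$ with an $\eps$-independent constant and needs no absorption at all. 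Your Young-plus-absorption device is legitimate only for the term $\eps\ges(t)\chi''$: after one integration by parts (also needed because \eqref{eq:def-di-chi} gives no bound on $\chi''$) it becomes $-\eps\ges(t)\int_0^\infty\px\ve\,\chi'\,dx$, whose $\eps$ prefactor lets the split $\tfrac{\eps}{2}\norm{\px\ve(t,\cdot)}^2_{L^2(0,\infty)}+C_0$ go through with constants independent of $\eps$. With this single correction the rest of your argument is sound and reproduces the paper's proof.
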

\begin{proof}
Let $t>0$. Multiplying \eqref{eq:equ-di-v} by $\ve$, we have
\begin{equation}
\label{eq:ptv-v}
\begin{split}
\ve\pt\ve &+ \ve^2\px\ve+ \ges(t)\ve^2\chi'+ \ges(t)\ve\chi\px\ve\\
=&\gamma\Pe\ve +\eps\ve\pxx\ve+\eps \ges(t)\ve\chi'' -\ges'(t)\ve\chi - \ges^2(t)\ve\chi\chi'.
\end{split}
\end{equation}
Since,
\begin{align*}
\int_{0}^{\infty}\ve\pt\ve dx &= \frac{1}{2}\frac{d}{dt}\norm{\ve(t,\cdot)}^2_{L^2(0,\infty)},\\
\ges(t)\int_{0}^{\infty}\ve\chi\px\ve dx&=-\frac{\ges(t)}{2}\int_{0}^{\infty}\ve^2\chi' dx ,\\
\eps\int_{0}^{\infty}\ve\pxx\ve dx &= -\eps \norm{\px\ve(t,\cdot)}^2_{L^2(0,\infty)},\\
\eps \ges(t)\int_{0}^{\infty}\ve\chi'' dx&= -\eps\ges(t)\int_{0}^{\infty}\px\ve\chi' dx,
\end{align*}
integrating \eqref{eq:ptv-v} on $(0,\infty)$,
\begin{equation}
\begin{split}
\label{eq:equ-l-2-v-1}
&\frac{1}{2}\frac{d}{dt}\norm{\ve(t,\cdot)}^2_{L^2(0,\infty)}+ \eps \norm{\px\ve(t,\cdot)}^2_{L^2(0,\infty)}\\
&\quad = -\ges(t)\int_{0}^{\infty}\ve^2 \chi dx +\frac{\ges(t)}{2}\int_{0}^{\infty}\ve^2\chi' dx\\
&\qquad +\gamma\int_{0}^{\infty}\Pe\ve dx -\eps\ges(t)\int_{0}^{\infty}\px\ve\chi' dx\\
&\qquad  -\ges'(t)\int_{0}^{\infty}\ve\chi dx - \ges^2(t)\int_{0}^{\infty}\ve\chi\chi' dx.
\end{split}
\end{equation}
Due to \eqref{eq:u0eps}, \eqref{eq:def-di-chi} and Young's inequality,
\begin{align*}
&\eps\left\vert\ges(t)\int_{0}^{\infty}\px\ve\chi' dx\right\vert\le \eps \vert \ges(t)\vert \int_{0}^{\infty}\left\vert\frac{\px\ve}{D_1}\right\vert\left\vert\chi' D_1\right\vert dx\\
&\quad \le \eps \frac{C_{0}}{2D_1^2}\norm{\px\ve(t,\cdot)}^2_{L^2(0,\infty)} + \frac{D_1^2}{2}\norm{\chi'}^2_{L^2(0,\infty)}\\
&\quad \le \eps \frac{C_{0}}{2D_1^2}\norm{\px\ve(t,\cdot)}^2_{L^2(0,\infty)} +D_1^2 C_{0},\\
&\left\vert\ges'(t)\int_{0}^{\infty}\ve\chi dx\right\vert \le \vert \ges'(t)\vert \int_{0}^{\infty}\vert\ve\vert\vert\chi\vert dx\\
&\quad \le \frac{C_{0}}{2}\norm{\ve(t,\cdot)}^2_{L^2(0,\infty)}+\frac{C_{0}}{2}\norm{\chi}^2_{L^2(0,\infty)}\\
&\quad \le C_{0}\norm{\ve(t,\cdot)}^2_{L^2(0,\infty)}+C_{0},\\
&\ges^2(t)\left\vert\int_{0}^{\infty}\ve\chi\chi' dx\right\vert \le \ges^2(t) \int_{0}^{\infty}\vert\ve\vert\vert\chi\vert\vert\chi'\vert dx\\
& \quad \le \frac{C_{0}\norm{\chi'}_{L^{\infty}(0,\infty)}}{2}\left(\norm{\ve(t,\cdot)}^2_{L^2(0,\infty)}+\norm{\chi}^2_{L^2(0,\infty)}\right)\\
&\quad \le C_{0}\norm{\ve(t,\cdot)}^2_{L^2(0,\infty)}+C_{0},
\end{align*}
where $D_1$ is a positive constant that will be specified later.

Moreover, again by \eqref{eq:u0eps} and  \eqref{eq:def-di-chi},
\begin{align*}
&\left\vert \ges(t)\int_{0}^{\infty}\ve^2 \chi dx\right\vert \le \vert \ges(t)\vert \int_{0}^{\infty}\ve^2 \vert\chi\vert dx\\
&\quad\qquad \le C_{0} \norm{\chi}_{L^{\infty}(0,\infty)}\norm{\ve(t,\cdot)}^2_{L^2(0,\infty)}\le C_{0}\norm{\ve(t,\cdot)}^2_{L^2(0,\infty)},\\
&\left\vert\frac{\ges(t)}{2}\int_{0}^{\infty}\ve^2\chi' dx\right\vert\le \frac{\vert \ges(t)\vert}{2} \int_{0}^{\infty}\ve^2\vert\chi'\vert dx\\
&\quad\qquad\le C_{0}\norm{\chi'}_{L^{\infty}(0,\infty)}\norm{\ve(t,\cdot)}^2_{L^2(0,\infty)}\le C_{0}\norm{\ve(t,\cdot)}^2_{L^2(0,\infty)}.
\end{align*}
It follows from \eqref{eq:Pv} and \eqref{eq:equ-l-2-v-1} that
\begin{align*}
&\frac{d}{dt}\norm{\ve(t,\cdot)}^2_{L^2(0,\infty)}+ \eps \left (2 - \frac{C_{0}}{D_1^2}\right)\norm{\px\ve(t,\cdot)}^2_{L^2(0,\infty)}\\
&\quad \le \gamma C_{0} \norm{\ve(t,\cdot)}^2_{L^2(0,\infty)}+ 8C_{0}\norm{\ve(t,\cdot)}^2_{L^2(0,\infty)}\\
&\qquad +2\gamma C_{0} +C_{0} +D_1^2 C_{0},
\end{align*}
that is
\begin{align*}
&\frac{d}{dt}\norm{\ve(t,\cdot)}^2_{L^2(0,\infty)}+ \eps \left (2 - \frac{C_{0}}{D_1^2}\right)\norm{\px\ve(t,\cdot)}^2_{L^2(0,\infty)}\\
&\quad \le C_{0}\norm{\ve(t,\cdot)}^2_{L^2(0,\infty)}+C_{0}+D_1^2 C_{0}.
\end{align*}
Choosing $D_1^2=C_{0}$, we get
\begin{equation*}
\frac{d}{dt}\norm{\ve(t,\cdot)}^2_{L^2(0,\infty)}+ \eps\norm{\px\ve(t,\cdot)}^2_{L^2(0,\infty)}\le C_{0}\norm{\ve(t,\cdot)}^2_{L^2(0,\infty)}+C_{0}.
\end{equation*}
Gronwall's Lemma and \eqref{eq:con-l-2-v} give
\begin{align*}
&\norm{\ve(t,\cdot)}^2_{L^2(0,\infty)}+\eps e^{C_{0}t}\int_{0}^{t}e^{-C_{0}s}\norm{\px\ve(s,\cdot)}^2_{L^2(0,\infty)}ds\\
&\quad \le \norm{u_0}_{L^2(0,\infty)}e^{C_{0}t}+C_{0}e^{C_{0}t}\int_{0}^{t}e^{-C_{0}s}ds\le \norm{u_0}_{L^2(0,\infty)}e^{C_{0}t}+C_{0}te^{C_{0}t},
\end{align*}
which gives \eqref{eq:stima-l-2-v}.

Let us show that \eqref{eq:ux-l-2} holds true.
We begin by observing that, \eqref{eq:from-u-to-v} and an multiplication by $\sqrt{\eps}$ give
\begin{equation}
\label{eq:ux-12-1}
\sqrt{\eps}\px\ue=\sqrt{\eps}\px\ve + \sqrt{\eps}\ges(t)\chi'.
\end{equation}
Squaring \eqref{eq:ux-12-1}, we have
\begin{equation*}
\eps(\px\ue)^2 = \eps(\px\ve)^2+2\eps\ges(t)\px\ve\chi' +\eps\ges^2(t)(\chi')^2.
\end{equation*}
Due to Young's inequality,
\begin{equation*}
2\eps\vert\ges(t)\px\ve\chi'\vert \le \eps(\px\ve)^2 +  \eps\ges^2(t)(\chi')^2.
\end{equation*}
Therefore, since $0<\eps<1$,
\begin{equation*}
\eps(\px\ue)^2 \le 2\eps(\px\ve)^2+ 2\ges^2(t)(\chi')^2.
\end{equation*}
An integration on $(0,\infty)$, \eqref{eq:u0eps} and \eqref{eq:def-di-chi} give
\begin{equation}
\label{eq:ux-14}
\eps\norm{\px\ue(t,\cdot)}^2_{L^2(0,\infty)}\le 2\eps\norm{\px\ve(t,\cdot)}^2_{L^2(0,\infty)} +C_{0}.
\end{equation}
Integrating \eqref{eq:ux-14} on $(0,t)$, we get
\begin{equation}
\label{eq:ux-15}
\begin{split}
&\eps\int_{0}^{t}\norm{\px\ue(s,\cdot)}^2_{L^2(0,\infty)}ds\le 2\eps\int_{0}^{t}\norm{\px\ve(s,\cdot)}^2_{L^2(0,\infty)}ds +C_{0}t\\
&\quad \le 2\eps e^{C_{0}t}\int_{0}^{t}e^{-C_{0}s}\norm{\px\ve(s,\cdot)}^2_{L^2(0,\infty)}ds +C_{0}t.
\end{split}
\end{equation}
\eqref{eq:ux-l-2} follows from \eqref{eq:stima-l-2-v} and \eqref{eq:ux-15}.

Finally, \eqref{eq:10021} follows from \eqref{eq:equ-L2-stima}, \eqref{eq:L-infty-Px} and  \eqref{eq:l-2-u}.
\end{proof}

\begin{lemma}\label{lm:def-di-f}
Let us consider the following function
\begin{equation}
\label{eq:def-di-f}
\Fe(t,x)=\int_{0}^{x} \Pe(t,y)dy \quad t>0,\ x>0.
\end{equation}
We have that
\begin{equation}
\label{eq:lim-di-f}
\lim_{x\to\infty}\Fe(t,x)=\int_{0}^{\infty}\Pe(t,x)dx=\frac{\eps}{\gamma}\ptx\Pe(t,0)+\frac{\eps}{\gamma}\px\ue(t,0)-\frac{1}{2\gamma}\ges^2(t).
\end{equation}
\end{lemma}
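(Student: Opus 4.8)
The plan is to integrate the $\ue$-equation of \eqref{eq:OHepsw} twice in the space variable. Writing that equation as $\gamma\Pe=\pt\ue+\px\!\left(\tfrac{\ue^2}{2}\right)-\eps\pxx\ue$, integrating over $(0,x)$, and using the boundary condition $\ue(t,0)=\ges(t)$ together with differentiation under the (bounded-interval) integral sign, one gets
\begin{equation*}
\gamma\Fe(t,x)=\frac{d}{dt}\int_{0}^{x}\ue(t,y)\dy+\frac{\ue^2(t,x)}{2}-\frac{\ges^2(t)}{2}-\eps\px\ue(t,x)+\eps\px\ue(t,0).
\end{equation*}
It then remains to pass to the limit $x\to\infty$ in this identity.

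For the last three terms on the right I would argue as in the proof of Lemma \ref{lm:cns}: the smoothness and decay at infinity of the classical solution $(\ue,\Pe)$ --- the same properties underlying \eqref{eq:P-pxP-intfy} --- give $\ue(t,x)\to0$ and $\px\ue(t,x)\to0$ as $x\to\infty$, so $\ue^2(t,x)/2$ and $\eps\px\ue(t,x)$ vanish in the limit. For the first term, \eqref{eq:int-u} gives $\int_0^\infty\ue(t,y)\dy=\eps\px\Pe(t,0)$ for every $t>0$, and since $\eps$ is constant, differentiating this identity in $t$ yields $\frac{d}{dt}\int_0^\infty\ue(t,y)\dy=\eps\ptx\Pe(t,0)$. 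Hence the right-hand side converges to $\eps\ptx\Pe(t,0)-\tfrac12\ges^2(t)+\eps\px\ue(t,0)$; in particular $\lim_{x\to\infty}\Fe(t,x)$ exists, and by \eqref{eq:def-di-f} it equals $\int_0^\infty\Pe(t,x)\dx$. Dividing by $\gamma$ produces exactly \eqref{eq:lim-di-f}.

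The one delicate point is the interchange of $\lim_{x\to\infty}$ with $\frac{d}{dt}$ in the term $\frac{d}{dt}\int_0^x\ue(t,y)\dy$, i.e. the passage from $\frac{d}{dt}\int_0^x\ue\dy$ to $\frac{d}{dt}\int_0^\infty\ue\dy$. I would justify this using the smoothness of $(\ue,\Pe)$ together with the a priori estimates already established (control of $\ue$, $\px\ue$, and, via \eqref{eq:OHepsw}, of $\pt\ue$, through Lemmas \ref{lm:cns}--\ref{lm:stima-l-2}), which furnish enough uniform-in-$x$ integrability to differentiate under the integral sign and to take the limit. Once this is settled, the remaining manipulation is purely algebraic.
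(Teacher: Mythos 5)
Your proof is correct and follows essentially the same route as the paper: integrate the first equation of \eqref{eq:OHepsw} over $(0,x)$, let $x\to\infty$ using the decay of $\ue$ and $\px\ue$, and identify $\lim_{x\to\infty}\int_0^x\pt\ue\,dy$ with $\frac{d}{dt}\int_0^\infty\ue\,dx=\eps\ptx\Pe(t,0)$ via \eqref{eq:int-u}. The interchange of limit and time derivative that you flag as delicate is performed in the paper as well (in \eqref{eq:12123}), with the same appeal to the regularity of the classical solution.
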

\begin{proof}
Integrating on $(0,x)$ the first equation of \eqref{eq:OHepsw}, we get
\begin{equation}
\label{eq:int-in-x}
\int_{0}^{x}\pt\ue(t,y) dy + \frac{1}{2}\ue^2(t,x)-\frac{1}{2}\ges^2(t)  - \eps\px\ue(t,x) + \eps\px\ue(t,0)= \gamma\int_{0}^{x}\Pe(t,y) dy.
\end{equation}
It follows from the regularity of $\ue$ that
\begin{equation}
\label{eq:1212}
\lim_{x\to\infty}\left(\frac{1}{2}\ue^2(t,x)-\eps\px\ue(t,x)\right)=0.
\end{equation}
For \eqref{eq:int-u}, we have that
\begin{equation}
\label{eq:12123}
\lim_{x\to\infty}\int_{0}^{x}\pt\ue(t,y)dy=\int_{0}^{\infty}\pt\ue(t,x) dx = \frac{d}{dt}\int_{0}^{\infty}\ue(t,x)dx=\eps\ptx\Pe(t,0).
\end{equation}
\eqref{eq:int-in-x}, \eqref{eq:1212} and \eqref{eq:12123} give \eqref{eq:lim-di-f}.
\end{proof}

\begin{lemma}\label{lm:P-infty}
Let $0<t<T$. There exists a function $C(T)>0$, independent on $\eps$, such that
\begin{align}
\label{eq:l-infty-P-1}
\norm{\Pe}_{L^{\infty}(I_T)} &\le C(T),\\
\label{eq:l-2-P-1}
\norm{\Pe(t,\cdot)}_{L^2(0,\infty)}&\le C(T),\\
\label{eq:L-2-px-P-1}
\eps\norm{\px\Pe(t,\cdot)}_{L^2(0,\infty)}&\le C(T),\\
\label{eq:l-2-px-u}
e^{2\gamma t}\int_{0}^{t} e^{-2\gamma s}\left(\eps\ptx\Pe(s,0)+\eps\px\ue(s,0)-\frac{1}{2}\ges^2(s)\right)^2 ds &\le C(T),
\end{align}
where
\begin{equation}
\label{eq:def-di-I}
I_T=(0,T)\times(0,\infty).
\end{equation}
In particular, we have
\begin{equation}
\label{eq:pt-px-P}
\eps\left\vert\int_{0}^{t}\!\!\!\int_{0}^{\infty}\Pe\ptx\Pe ds dx\right\vert\le C(T), \quad 0<t<T.
\end{equation}
\end{lemma}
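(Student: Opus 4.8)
The plan is to extract \eqref{eq:l-2-P-1} and \eqref{eq:l-2-px-u} simultaneously from a single Gronwall estimate for
$A_\eps(t):=\norm{\Pe(t,\cdot)}^2_{L^2(0,\infty)}+\eps^2\norm{\px\Pe(t,\cdot)}^2_{L^2(0,\infty)}$, and then to obtain the other three bounds as corollaries. The starting point is the identity
\[
\pt\Pe=\gamma\Fe-\tfrac12\ue^2+\tfrac12\ges^2(t)+\eps\px\ue-\eps\px\ue(t,0)+\eps\ptx\Pe-\eps\ptx\Pe(t,0).
\]
Indeed, with $Q_\eps(t,x):=\int_0^x\ue(t,y)\dy$ one has $\px(Q_\eps-\Pe+\eps\px\Pe)=\ue-(\px\Pe-\eps\pxx\Pe)=0$ by the second equation of \eqref{eq:OHepsw}, and $Q_\eps(t,0)=\Pe(t,0)=0$ forces $Q_\eps=\Pe-\eps\px\Pe+\eps\px\Pe(t,0)$, hence $\pt\Pe=\pt Q_\eps+\eps\ptx\Pe-\eps\ptx\Pe(t,0)$; and \eqref{eq:int-in-x}, after interchanging $\pt$ with $\int_0^x$, gives exactly the displayed expression for $\pt Q_\eps$.

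I would then compute $\tfrac{d}{dt}\norm{\Pe(t,\cdot)}^2_{L^2}=2\int_0^\infty\Pe\,\pt\Pe\dx$ by inserting this formula. Using $\px\Fe=\Pe$, the vanishing $\Fe(t,0)=\Pe(t,0)=0$, $\Pe(t,\infty)=\px\Pe(t,\infty)=0$ of Lemma~\ref{lm:cns}, and the value of $\Fe(t,\infty)=\int_0^\infty\Pe\dx$ supplied by \eqref{eq:lim-di-f}, the ``at infinity'' contributions — those coming from $\gamma\Fe$, $\tfrac12\ges^2(t)$, $-\eps\px\ue(t,0)$ and $-\eps\ptx\Pe(t,0)$ — collapse to exactly $-\gamma\Fe(t,\infty)^2$. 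The term $2\eps\int\Pe\,\px\ue\dx$, after an integration by parts and the substitution $\ue=\px\Pe-\eps\pxx\Pe$, equals $-2\eps\norm{\px\Pe(t,\cdot)}^2_{L^2}-\eps^2(\px\Pe(t,0))^2\le0$; and $2\eps\int\Pe\,\ptx\Pe\dx=-2\eps\int\px\Pe\,\pt\Pe\dx$ is expanded once more by the same formula for $\pt\Pe$, producing the time derivative $-\eps^2\tfrac{d}{dt}\norm{\px\Pe(t,\cdot)}^2_{L^2}$ (which I move to the left, so that $A_\eps$ appears), the term $2\eps\gamma\norm{\Pe(t,\cdot)}^2_{L^2}$, a boundary combination $2\eps^2\px\Pe(t,0)\ges(t)+2\eps\norm{\px\Pe(t,\cdot)}^2_{L^2}+\eps^2(\px\Pe(t,0))^2-2\eps\norm{\ue(t,\cdot)}^2_{L^2}$, and a cubic term $\eps\int\px\Pe\,\ue^2\dx$. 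Using \eqref{eq:equ-L2-stima}, \eqref{eq:l-2-u}, \eqref{eq:10021} to dominate the boundary pieces by $C(T)$, I arrive at
\[
\frac{d}{dt}A_\eps(t)+\gamma\Fe(t,\infty)^2\le-\int_0^\infty\Pe\,\ue^2\dx+\eps\int_0^\infty\px\Pe\,\ue^2\dx+2\eps\gamma\norm{\Pe(t,\cdot)}^2_{L^2(0,\infty)}+C(T).
\]

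The delicate point — and the main obstacle — is the two cubic terms, since no $L^\infty$ bound on $\ue$ is available at this stage. I would estimate $\bigl|\int\Pe\,\ue^2\dx\bigr|\le\norm{\Pe(t,\cdot)}_{L^\infty}\norm{\ue(t,\cdot)}^2_{L^2}$ and, by the one-dimensional interpolation $\norm{\Pe(t,\cdot)}^2_{L^\infty}\le2\norm{\Pe(t,\cdot)}_{L^2}\norm{\px\Pe(t,\cdot)}_{L^2}$ (valid because $\Pe(t,0)=0$) together with $\norm{\px\Pe(t,\cdot)}_{L^2}\le C(T)$ and $\norm{\ue(t,\cdot)}^2_{L^2}\le C(T)$ from \eqref{eq:10021}, \eqref{eq:l-2-u}, obtain $\bigl|\int\Pe\,\ue^2\dx\bigr|\le C(T)\norm{\Pe(t,\cdot)}^{1/2}_{L^2}\le C(T)(1+A_\eps(t))$ after Young's inequality; the second cubic term is controlled by $\eps\bigl|\int\px\Pe\,\ue^2\dx\bigr|\le\sqrt\eps\,\bigl(\sqrt\eps\,\norm{\px\Pe(t,\cdot)}_{L^\infty}\bigr)\norm{\ue(t,\cdot)}^2_{L^2}\le\sqrt\eps\,\norm{\ue(t,\cdot)}^3_{L^2}\le C(T)$ via \eqref{eq:L-infty-Px} and \eqref{eq:l-2-u}. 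This leaves $\tfrac{d}{dt}A_\eps(t)+\gamma\Fe(t,\infty)^2\le C(T)(1+A_\eps(t))$, and Gronwall's lemma — with $A_\eps(0)\le\norm{P_0}^2_{L^2}+C_0$ from \eqref{eq:u0eps} and \eqref{eq:l-2-di-P0} — gives $A_\eps(t)\le C(T)$, hence \eqref{eq:l-2-P-1} and, since $\eps<1$, \eqref{eq:L-2-px-P-1}; it also gives $\int_0^te^{-C(T)s}\Fe(s,\infty)^2\ds\le C(T)$, whence, multiplying by $e^{2\gamma t}$, bounding $e^{-2\gamma s}\le e^{C(T)T}e^{-C(T)s}$ on $[0,T]$ and recalling \eqref{eq:lim-di-f}, one gets \eqref{eq:l-2-px-u}. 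Then \eqref{eq:l-infty-P-1} follows at once from $\norm{\Pe(t,\cdot)}^2_{L^\infty}\le2\norm{\Pe(t,\cdot)}_{L^2}\norm{\px\Pe(t,\cdot)}_{L^2}\le C(T)$. Note that the argument is only mildly self-referential: the interpolation uses $\norm{\Pe}_{L^2}$ and the already available $\norm{\px\Pe}_{L^2}$, so no circularity arises.

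For \eqref{eq:pt-px-P} I would write $\int_0^\infty\Pe\,\ptx\Pe\dx=-\int_0^\infty\px\Pe\,\pt\Pe\dx$ (integration by parts, $\Pe(t,0)=\Pe(t,\infty)=0$), insert the formula for $\pt\Pe$, and integrate over $s\in(0,t)$. The pieces with $x$-independent coefficients vanish since $\int_0^\infty\px\Pe\dx=0$; $-\gamma\int_0^t\!\!\int\px\Pe\,\Fe\dx\ds=\gamma\int_0^t\norm{\Pe(s,\cdot)}^2_{L^2}\ds$ is bounded by \eqref{eq:l-2-P-1}; $\int_0^t\!\!\int\px\Pe\,\ue^2\dx\ds$ by $\norm{\px\Pe(s,\cdot)}_{L^\infty}\le C(T)\eps^{-1/2}$ from \eqref{eq:L-infty-Px} and \eqref{eq:l-2-u}; $\eps\int_0^t\!\!\int\px\Pe\,\px\ue\dx\ds$ by Cauchy--Schwarz and \eqref{eq:ux-l-2}; and $\eps\int_0^t\!\!\int\px\Pe\,\ptx\Pe\dx\ds=\tfrac\eps2(\norm{\px\Pe(t,\cdot)}^2_{L^2}-\norm{\px P_{\eps,0}}^2_{L^2})$ by \eqref{eq:10021} and \eqref{eq:u0eps}. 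After multiplying the identity by $\eps$, every surviving contribution is $O(\sqrt\eps)\,C(T)$ or $O(\eps)\,C(T)$ — the a priori divergent $\eps^2\norm{\px P_{\eps,0}}^2_{L^2}$ being exactly $\le C_0$ — so \eqref{eq:pt-px-P} follows. Apart from the cubic terms, the only other place calling for care is the systematic cancellation of the numerous $x=0$ and $x=\infty$ boundary terms, which rests on Lemma~\ref{lm:cns} and on the precise value \eqref{eq:lim-di-f} of $\Fe(t,\infty)$.
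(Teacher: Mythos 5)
Your proof is correct, and its skeleton coincides with the paper's: the same representation $Q_\eps=\Pe-\eps\px\Pe+\eps\px\Pe(t,0)$ leading to the formula for $\pt\Pe$ (the paper's \eqref{eq:P-int-in-0}--\eqref{eq:equat-per-P}), the same energy functional $A_\eps=G$ from \eqref{eq:def-di-G}, the same cancellation of the $x=\infty$ contributions into $-\gamma\Fe(t,\infty)^2=-\frac1\gamma\bigl(\eps\ptx\Pe(t,0)+\eps\px\ue(t,0)-\frac12\ges^2(t)\bigr)^2$ via \eqref{eq:lim-di-f}, and the same Gronwall step; your derivation of \eqref{eq:pt-px-P} via $-\int\px\Pe\,\pt\Pe$ is also equivalent to the paper's (multiplying \eqref{eq:equat-per-P} by $\Pe$), only with the $x$-independent terms killed earlier by $\int_0^\infty\px\Pe\,dx=0$. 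The one genuine divergence is the treatment of the cubic term $\int_0^\infty\Pe\,\ue^2\,dx$, which is precisely the delicate point. The paper bounds it by $C(T)\norm{\Pe}_{L^\infty(I_T)}$ (see \eqref{eq:399}), carries the unknown $\norm{\Pe}_{L^\infty(I_T)}$ through Gronwall to reach \eqref{eq:l-2-P}, and only then closes the loop with the interpolation $\Pe^2\le2\norm{\Pe}_{L^2}\norm{\px\Pe}_{L^2}$, ending in the quadratic inequality \eqref{eq:equa-P-norm} for $\norm{\Pe}_{L^\infty(I_T)}$. You instead deploy the same interpolation pointwise in time, combined with the already available bound $\norm{\px\Pe(t,\cdot)}_{L^2}\le C(T)$ from \eqref{eq:10021}, to get $\bigl|\int\Pe\ue^2\bigr|\le C(T)\norm{\Pe(t,\cdot)}_{L^2}^{1/2}\le C(T)(1+A_\eps(t))$, so that Gronwall closes directly and \eqref{eq:l-infty-P-1} drops out at the end as a corollary rather than being the linchpin of a bootstrap. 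What your route buys is the elimination of the self-referential step (no need for $\norm{\Pe}_{L^\infty(I_T)}$ to be finite a priori and to satisfy a quadratic inequality); what the paper's route buys is that it never needs the observation that \eqref{eq:10021} already controls $\norm{\px\Pe}_{L^2}$ uniformly before the lemma starts. Both are legitimate, and all your auxiliary estimates (the sign of $2\eps\int\Pe\px\ue\,dx$, the $\sqrt{\eps}$-smallness of $\eps\int\px\Pe\,\ue^2\,dx$ via \eqref{eq:L-infty-Px}, the control of $\eps^2\norm{\px P_{\eps,0}}^2_{L^2}$ via \eqref{eq:u0eps}) check out.
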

\begin{proof}
Let $0<t<T$. We begin by observing that, integrating in $(0,x)$ the second equation of \eqref{eq:OHepsw}, we get
\begin{equation}
\label{eq:P-int-in-0}
\Pe(t,x)=\int_{0}^{x}\ue(t,y)dy + \eps \px\Pe(t,x)- \eps\px\Pe(t,0).
\end{equation}
Differentiating with respect to $t$, we have that
\begin{align*}
\pt\Pe(t,x)&=\frac{d}{dt}\int_{0}^{x}\ue(t,y)dy + \eps \ptx\Pe(t,x)- \eps\ptx\Pe(t,0)\\
&=\int_{0}^{x}\pt\ue(t,x) +\eps \ptx\Pe(t,x)- \eps\ptx\Pe(t,0).
\end{align*}
It follows from \eqref{eq:def-di-f} and \eqref{eq:int-in-x} that
\begin{equation}
\label{eq:equat-per-P}
\begin{split}
\pt\Pe(t,x)=&\gamma\Fe(t,x) -\frac{1}{2}\ue^2(t,x))+\frac{1}{2}\ges^2(t)+\eps\px\ue(t,x)\\
&-\eps\px\ue(t,0)+ \eps \ptx\Pe(t,x)- \eps\ptx\Pe(t,0).
\end{split}
\end{equation}
Multiplying \eqref{eq:equat-per-P} by $\Pe - \eps\px\Pe$, we have that
\begin{equation}
\label{eq:1234}
\begin{split}
(\Pe - \eps\px\Pe)\pt\Pe= &\gamma(\Pe - \eps\px\Pe) \Fe-\frac{1}{2}(\Pe - \eps\px\Pe)\ue^2\\
&+\frac{1}{2}(\Pe - \eps\px\Pe)\ges^2(t) -\eps(\Pe - \eps\px\Pe)\px\ue(t,0)\\
& +\eps(\Pe -\eps\px\Pe)\px\ue +\eps(\Pe -\eps\px\Pe) \ptx\Pe\\
& - \eps(\Pe -\eps\px\Pe)\ptx\Pe(t,0).
\end{split}
\end{equation}
Integrating \eqref{eq:1234} on $(0,x)$, for \eqref{eq:OHepsw}, we get
\begin{equation}
\label{eq:1235}
\begin{split}
&\int_{0}^{x}\Pe\pt\Pe dy -\eps \int_{0}^{x}\px\Pe\pt\Pe dy\\
&\quad=\gamma\int_{0}^{x}\Pe\Fe dy - \eps\int_{0}^{x}\Fe\px\Pe dy - \frac{1}{2}\int_{0}^{x}\Pe \ue^2 dy\\
&\qquad +\frac{\eps}{2}\int_{0}^{x}\px\Pe \ue^2dy +\frac{1}{2}\ges^2(t)\int_{0}^{x}\Pe dy -\frac{\eps}{2}\ges^2(t)\Pe\\
&\qquad -\eps\px\ue(t,0)\int_{0}^{y} \Pe dx+\eps^2\px\ue(t,0)\Pe+\eps\int_{0}^{x} \Pe\px\ue dy \\
&\qquad -\eps^2\int_{0}^{x}\px\Pe\px\ue dy + \eps\int_{0}^{x}\Pe\ptx\Pe dy-\eps^2\int_{0}^{x}\px\Pe\ptx\Pe dy\\
&\qquad  -\eps\ptx\Pe(t,0)\int_{0}^{x}\Pe dy +\eps^2\ptx\Pe(t,0)\Pe.
\end{split}
\end{equation}
We observe that, for \eqref{eq:OHepsw},
\begin{equation}
\label{eq:int-by-part}
-\eps \int_{0}^{x}\px\Pe\pt\Pe dy=-\eps\Pe\pt\Pe + \eps\int_{0}^{x}\Pe\ptx\Pe dy.
\end{equation}
Therefore, \eqref{eq:1235} and \eqref{eq:int-by-part} give
\begin{equation}
\begin{split}
\label{eq:P-P-x}
&\int_{0}^{x}\Pe\pt\Pe dy+\eps^2\int_{0}^{x}\px\Pe\ptx\Pe dy\\
&\quad=\eps\Pe\pt\Pe + \gamma\int_{0}^{x}\Pe\Fe dy- \eps\int_{0}^{x}\Fe\px\Pe dy\\
&\qquad - \frac{1}{2}\int_{0}^{x}\Pe \ue^2 dy +\frac{\eps}{2}\int_{0}^{x}\px\Pe \ue^2dy +\frac{1}{2}\ges^2(t)\int_{0}^{x}\Pe dy \\
&\qquad -\frac{\eps}{2}\ges^2(t)\Pe-\eps\px\ue(t,0)\int_{0}^{y} \Pe dx +\eps^2\px\ue(t,0)\Pe \\
&\qquad +\eps\int_{0}^{x} \Pe\px\ue dy -\eps^2\int_{0}^{x}\px\Pe\px\ue dy-\eps\ptx\Pe(t,0)\int_{0}^{x}\Pe dy\\
&\qquad +\eps^2\ptx\Pe(t,0)\Pe.
\end{split}
\end{equation}
Since
\begin{align*}
\int_{0}^{\infty}\Pe\pt\Pe dx &=\frac{1}{2}\frac{d}{dt}\int_{0}^{\infty}\Pe^2dx,\\
\eps^2\int_{0}^{\infty}\ptx\Pe\px\Pe dx &= \frac{\eps^2}{2}\frac{d}{dt}\int_{0}^{\infty}(\px\Pe)^2dx,
\end{align*}
when $x\to\infty$, for \eqref{eq:P-pxP-intfy} and \eqref{eq:P-P-x}, we have that
\begin{equation}
\label{eq:12312}
\begin{split}
&\frac{1}{2}\frac{d}{dt}\int_{0}^{\infty}\Pe^2dx+\frac{\eps^2}{2}\frac{d}{dt}\int_{0}^{\infty}(\px\Pe)^2dx\\
&\quad= \gamma\int_{0}^{\infty}\Pe\Fe dx - \eps\gamma\int_{0}^{\infty}\px\Pe\Fe dx -\frac{1}{2}\int_{0}^{\infty} \Pe \ue^2dx\\
&\qquad +\frac{\eps}{2} \int_{0}^{\infty}\px\Pe \ue^2 dx +\frac{1}{2}\ges^2(t)\int_{0}^{\infty}\Pe dx - \eps\px\ue(t,0)\int_{0}^{\infty}\Pe dx \\
&\qquad +\eps\int_{0}^{\infty}\Pe\px\ue dx+\eps^2\int_{0}^{\infty}\px\Pe\px\ue dx -\eps\ptx\Pe(t,0)\int_{0}^{\infty}\Pe dx.
\end{split}
\end{equation}
Due to \eqref{eq:def-di-f} and \eqref{eq:lim-di-f},
\begin{align*}
2\gamma\int_{0}^{\infty}\Pe\Fe dx&=2\gamma\int_{0}^{\infty}\Fe\px\Fe dx=\gamma(\Fe(t,\infty))^2\\
&= \frac{1}{\gamma}\left(\eps\ptx\Pe(t,0)+\eps\px\ue(t,0)-\frac{1}{2}\ges^2(t)\right)^2,
\end{align*}
that is
\begin{equation}
\begin{split}
\label{eq:342}
2\gamma\int_{0}^{\infty}\Pe\Fe dx=& \frac{\eps^2}{\gamma}(\ptx\Pe(t,0))^2+ \frac{2\eps^2}{\gamma}\ptx\Pe(t,0)\px\ue(t,0)+\frac{\eps^2}{\gamma}(\px\ue(t,0))^2\\
&+\frac{1}{4\gamma}\ges^4(t) -\frac{\eps}{\gamma}\ptx\Pe(t,0)\ges^2(t)-\frac{\eps}{\gamma}\px\ue(t,0)\ges^2(t).
\end{split}
\end{equation}
Again by \eqref{eq:lim-di-f},
\begin{equation}
\label{eq:343}
\begin{split}
-2\eps\px\ue(t,0)\int_{0}^{\infty}\Pe dx=& -2\frac{\eps^2}{\gamma}(\ptx\Pe(t,0))\px\ue(t,0)\\ &-2\frac{\eps^2}{\gamma}(\px\ue(t,0))^2+\frac{\eps}{\gamma}\px\ue(t,0)\ges^2(t),\\
- 2\eps\ptx\Pe(t,0)\int_{0}^{\infty}\Pe dx=& -2\frac{\eps^2}{\gamma}(\ptx\Pe(t,0))^2\\
&- 2\frac{\eps^2}{\gamma}\ptx\Pe(t,0)\px\ue(t,0)+\frac{\eps}{\gamma}\ptx\Pe(t,0)\ges^2(t),\\
\ges^2(t)\int_{0}^{\infty}\Pe dx=& \frac{\eps}{\gamma}\ptx\Pe(t,0)\ges^2(t)+\frac{\eps}{\gamma}\px\ue(t,0)\ges^2(t)-\frac{1}{2\gamma}\ges^4(t).
\end{split}
\end{equation}
Therefore, \eqref{eq:12312}, \eqref{eq:342} and \eqref{eq:343} give
\begin{align*}
&\frac{d}{dt}\left(\int_{0}^{\infty}\Pe^2dx+ \eps^2\int_{0}^{\infty}(\px\Pe)^2dx\right)\\
&\quad=\frac{\eps^2}{\gamma}(\ptx\Pe(t,0))^2+ \frac{2\eps^2}{\gamma}\ptx\Pe(t,0)\px\ue(t,0)+ \frac{\eps^2}{\gamma}(\px\ue(t,0))^2\\
&\qquad +\frac{1}{4\gamma}\ges^4(t) -\frac{\eps}{\gamma}\ptx\Pe(t,0)\ges^2(t)-\frac{\eps}{\gamma}\px\ue(t,0)\ges^2(t)\\
&\qquad -2\eps\gamma\int_{0}^{\infty}\px\Pe\Fe dx - \int_{0}^{\infty} \Pe \ue^2 dx +\eps \int_{0}^{\infty}\px\Pe \ue^2 dx\\
&\qquad +\frac{\eps}{\gamma}\ptx\Pe(t,0)\ges^2(t)+\frac{\eps}{\gamma}\px\ue(t,0)\ges^2(t)-\frac{1}{2\gamma}\ges^4(t)\\
&\qquad -2\frac{\eps^2}{\gamma}(\ptx\Pe(t,0))\px\ue(t,0) -2\frac{\eps^2}{\gamma}(\px\ue(t,0))^2 +\frac{\eps}{\gamma}\px\ue(t,0)\ges^2(t) \\
&\qquad + 2\eps\int_{0}^{\infty}\Pe\px\ue dx+2\eps^2\int_{0}^{\infty}\px\Pe\px\ue dx  -2\frac{\eps^2}{\gamma}(\ptx\Pe(t,0))^2\\
&\qquad - 2\frac{\eps^2}{\gamma}\ptx\Pe(t,0)\px\ue(t,0)+\frac{\eps}{\gamma}\ptx\Pe(t,0)\ges^2(t),
\end{align*}
that is,
\begin{equation}
\label{eq:345}
\begin{split}
&\frac{d}{dt}\left(\int_{0}^{\infty}\Pe^2dx+ \eps^2\int_{0}^{\infty}(\px\Pe)^2dx\right)+\frac{1}{\gamma}\left(\eps\ptx\Pe(t,0)+\eps\px\ue(t,0)-\frac{1}{2}\ges^2(t)\right)^2\\
&\quad =  -2\eps\gamma\int_{0}^{\infty}\px\Pe\Fe dx - \int_{0}^{\infty} \Pe \ue^2dx +\eps \int_{0}^{\infty}\px\Pe \ue^2 dx\\
&\qquad  + 2\eps\int_{0}^{\infty}\Pe\px\ue dx +2\eps^2\int_{0}^{\infty}\px\Pe\px\ue dx.
\end{split}
\end{equation}
Thanks to \eqref{eq:OHepsw}, \eqref{eq:P-pxP-intfy}, \eqref{eq:def-di-f} and \eqref{eq:lim-di-f},
\begin{equation}
\label{eq:346}
-2\eps\gamma\int_{0}^{\infty}\px\Pe\Fe dx=2\eps\gamma\int_{0}^{\infty}\Pe\px\Fe dx = 2\eps\gamma\int_{0}^{\infty} \Pe^2 dx\le 2\gamma \int_{0}^{\infty} \Pe^2 dx,
\end{equation}
while, for \eqref{eq:OHepsw} and \eqref{eq:P-pxP-intfy},
\begin{equation}
\label{eq:347}
 2\eps\int_{0}^{\infty}\Pe\px\ue=-2\eps\int_{0}^{\infty}\ue\px\Pe dx.
\end{equation}
Hence, \eqref{eq:345}, \eqref{eq:346} and \eqref{eq:347} give
\begin{equation*}
\begin{split}
&\frac{d}{dt}\left(\norm{\Pe(t,\cdot)}^2_{L^{2}(0,\infty)}+\eps^2\norm{\px\Pe(t,\cdot)}^2_{L^{2}(0,\infty)}\right)\\
&\qquad +\frac{1}{\gamma}\left(\eps\ptx\Pe(t,0)+\eps\px\ue(t,0)-\frac{1}{2}\ges^2(t)\right)^2\\
&\quad \leq 2\gamma\norm{\Pe(t,\cdot)}^2_{L^{2}(0,\infty)} - \int_{0}^{\infty} \Pe \ue^2dx +\eps \int_{0}^{\infty}\px\Pe \ue^2 dx\\
&\qquad  - 2\eps\int_{0}^{\infty}\ue\px\Pe dx +2\eps^2\int_{0}^{\infty}\px\Pe\px\ue dx.
\end{split}
\end{equation*}
Thus,
\begin{equation}
\label{eq:349}
\begin{split}
&\frac{d}{dt}\left(\norm{\Pe(t,\cdot)}^2_{L^{2}(0,\infty)}+\eps^2\norm{\px\Pe(t,\cdot)}^2_{L^{2}(0,\infty)}\right)\\
&\qquad +\frac{1}{\gamma}\left(\eps\ptx\Pe(t,0)+\eps\px\ue(t,0)-\frac{1}{2}\ges^2(t)\right)^2\\
&\quad \leq 2\gamma\norm{\Pe(t,\cdot)}^2_{L^{2}(0,\infty)} +\left\vert \int_{0}^{\infty} \Pe \ue^2dx\right\vert +\eps \left\vert\int_{0}^{\infty}\px\Pe \ue^2 dx\right\vert\\
&\qquad +2\eps\left\vert \int_{0}^{\infty}\ue\px\Pe dx\right\vert +2\eps^2\left\vert\int_{0}^{\infty}\px\Pe\px\ue dx\right\vert\\
&\quad  \leq 2\gamma\norm{\Pe(t,\cdot)}^2_{L^{2}(0,\infty)}+ \int_{0}^{\infty} \vert\Pe\vert \ue^2dx +\eps \int_{0}^{\infty}\vert\px\Pe\vert \ue^2 dx\\
&\qquad +2\eps\int_{0}^{\infty}\vert\ue\vert\vert\px\Pe\vert dx + 2\eps^2\int_{0}^{\infty}\vert\px\Pe\vert\vert\px\ue\vert dx.
\end{split}
\end{equation}
For Young's inequality,
\begin{align*}
&2\eps\int_{0}^{\infty}\vert\px\Pe \vert\vert \ue\vert dx=\int_{0}^{\infty}\left\vert\frac{\ue}{\sqrt{\gamma}}\right\vert\vert 2\eps\sqrt{\gamma}\px\Pe\vert dx\\
&\quad \le  2\gamma\eps^2\norm{\px\Pe(t,\cdot)}^2_{L^{2}(0,\infty)} +\frac{1}{2\gamma}\norm{\ue(t,\cdot)}^2_{L^{2}(0,\infty)},\\
&2\eps^2\int_{0}^{\infty}\vert\px\Pe\vert\vert\px\ue\vert \le\eps^2\norm{\px\Pe(t,\cdot)}^2_{L^{2}(0,\infty)}+ \eps^2 \norm{\px\ue(t,\cdot)}^2_{L^{2}(0,\infty)}.
\end{align*}
Thus,
\begin{align*}
&\frac{d}{dt}G(t)+\frac{1}{\gamma}\left(\eps\ptx\Pe(t,0)+\eps\px\ue(t,0)-\frac{1}{2}\ges^2(t)\right)^2\\
&\quad \le 2\gamma G(t)+\frac{1}{2\gamma}\norm{\ue(t,\cdot)}^2_{L^{2}(0,\infty)} + \int_{0}^{\infty} \vert\Pe\vert \ue^2 dx + \eps  \int_{0}^{\infty}\vert\px\Pe\vert \ue^2dx\\
&\qquad +\eps^2 \int_{0}^{\infty}(\px\ue)^2 dx +\eps^2\norm{\px\Pe(t,\cdot)}^2_{L^{2}(0,\infty)}+ \eps^2 \norm{\px\ue(t,\cdot)}^2_{L^{2}(0,\infty)},
\end{align*}
that is
\begin{equation}
\label{eq:350}
\begin{split}
&\frac{d}{dt}G(t)-2\gamma G(t)+\frac{1}{\gamma}\left(\eps\ptx\Pe(t,0)+\eps\px\ue(t,0)-\frac{1}{2}\ges^2(t)\right)^2\\
&\quad \le \frac{1}{2\gamma}\norm{\ue(t,\cdot)}^2_{L^{2}(0,\infty)}+\int_{0}^{\infty} \vert\Pe\vert \ue^2 dx\\
&\qquad + \eps  \int_{0}^{\infty}\vert\px\Pe\vert \ue^2dx +\eps^2\norm{\px\Pe(t,\cdot)}^2_{L^{2}(0,\infty)}\\
&\qquad +\eps^2 \norm{\px\ue(t,\cdot)}^2_{L^{2}(0,\infty)},
\end{split}
\end{equation}
where
\begin{equation}
\label{eq:def-di-G}
G(t)=\norm{\Pe(t,\cdot)}^2_{L^2(0,\infty)} + \eps^2\norm{\px\Pe(t,\cdot)}^2_{L^2(0,\infty)}.
\end{equation}
We observe that, for \eqref{eq:l-2-u},
\begin{equation}
\label{eq:399}
\int_{0}^{\infty} \vert\Pe\vert \ue^2 dx\le C_{0}\left(e^{C_0 t}(1+t)+1\right) \norm{\Pe}_{L^{\infty}(I_T)},
\end{equation}
where $I_T$ is defined in \eqref{eq:def-di-I}.\\
Since $0<\eps<1$, it follows from \eqref{eq:l-2-u} and \eqref{eq:10021} that
\begin{equation}
\label{eq:400}
\begin{split}
\eps\int_{0}^{\infty}\vert\px\Pe\vert \ue^2dx&\le \eps\norm{\px\Pe(t,\cdot)}_{L^{\infty}(0,\infty)}\norm{\ue(t,\cdot)}^2_{L^2(0,\infty)}\\
&\le \sqrt{\eps}C_{0}\left(e^{C_{0}t}(1+t) +1\right)^{\frac{3}{2}}\le C_{0}\left(e^{C_{0}t}(1+t) +1\right)^{\frac{3}{2}}.
\end{split}
\end{equation}
Again by $0<\eps <1$ and \eqref{eq:10021}, we have that
\begin{equation}
\label{eq:401}
\eps^2\int_{0}^{\infty}(\px\Pe)^2 dx\le\norm{\px\Pe(t,\cdot)}^2_{L^2(0,\infty)} \le C_{0}\left(e^{C_0 t}(1+t)+1\right).
\end{equation}
Therefore, \eqref{eq:l-2-u}, \eqref{eq:350}, \eqref{eq:399}, \eqref{eq:400} and \eqref{eq:401} give
\begin{equation}
\label{eq:502}
\begin{split}
&\frac{d}{dt}G(t)-2\gamma G(t)+\frac{1}{\gamma}\left(\eps\ptx\Pe(t,0)+\eps\px\ue(t,0)-\frac{1}{2}\ges^2(t)\right)^2\\
&\quad \le \theta_{1}(t) + \theta_{2}(t) \norm{\Pe}_{L^{\infty}(I_T)}+ \eps^2\norm{\px\ue(t,\cdot)}^2_{L^2(0,\infty)},
\end{split}
\end{equation}
where
\begin{align*}
\theta_{1}(t)=&2C_{0}\left(e^{C_0 t}(1+t)+1\right)+C_{0}\left(e^{C_{0}t}(1+t) +1\right)^{\frac{3}{2}},\\
\theta_{2}(t)=& C_{0}\left(e^{C_0 t}(1+t)+1\right),
\end{align*}
are two continuous functions in $t$.

Gronwall's Lemma, \eqref{eq:u0eps}  and \eqref{eq:def-di-G} give
\begin{align*}
&\norm{\Pe(t,\cdot)}^2_{L^2(0,\infty)} + \eps^2\norm{\px\Pe(t,\cdot)}^2_{L^2(0,\infty)}\\
&\qquad +\frac{e^{2\gamma t}}{\gamma}\int_{0}^{t} e^{-2\gamma s} \left(\eps\ptx\Pe(s,0)+\eps\px\ue(s,0)-\frac{1}{2}\ges^2(s)\right)^2 ds\\
&\quad\le \norm{P_{0}}^2_{L^2(0,\infty)}e^{2\gamma t}+e^{2\gamma t}\int_{0}^{t}e^{-2\gamma s}\theta_{1}(s)ds + \norm{\Pe}_{L^{\infty}(I_T)}e^{2\gamma t}\int_{0}^{t}e^{-2\gamma s}\theta_{2}(s)ds\\
&\qquad+\eps^2 e^{2\gamma t}\int_{0}^{t} e^{-2\gamma t} \norm{\px\ue(s,\cdot)}^2_{L^2(0,\infty)} ds\\
&\quad\le \norm{P_{0}}^2_{L^2(0,\infty)}e^{2\gamma t} +\norm{\theta_{1}}_{L^{\infty}(0,T)}te^{2\gamma t} + \norm{\Pe}_{L^{\infty}(I_T)}\norm{\theta_{2}}_{L^{\infty}(0,T)}t e^{2\gamma t}\\
&\qquad+\eps^2 e^{2\gamma t}\int_{0}^{t} e^{-2\gamma } \norm{\px\ue(s,\cdot)}^2_{L^2(0,\infty)} ds.
\end{align*}
For \eqref{eq:ux-l-2},
\begin{align*}
&\eps^2 e^{2\gamma t}\int_{0}^{t} e^{-2\gamma s} \norm{\px\ue(s,\cdot)}^2_{L^2(0,\infty)} ds\\
&\quad\le \eps e^{2\gamma t}\int_{0}^{t}\norm{\px\ue(s,\cdot)}^2_{L^2(0,\infty)} ds\le\eps\theta_{3}(t)\le\norm{\theta_3}_{L^{\infty}(0,T)},
\end{align*}
where,
\begin{equation*}
\theta_{3}(t)=C_{0}e^{2\gamma t}\left(e^{C_0 t}(1+t)+t\right).
\end{equation*}
Hence,
\begin{align*}
&\norm{\Pe(t,\cdot)}^2_{L^2(0,\infty)} + \eps^2\norm{\px\Pe(t,\cdot)}^2_{L^2(0,\infty)}\\
&\qquad+\frac{e^{2\gamma t}}{\gamma}\int_{0}^{t} e^{-2\gamma s} \left(\eps\ptx\Pe(s,0)+\eps\px\ue(s,0)-\frac{1}{2}\ges^2(s)\right)^2 ds\\
&\quad \le \norm{P_{0}}^2_{L^2(0,\infty)}e^{2\gamma t} +\norm{\theta_{1}}_{L^{\infty}(0,T)}te^{2\gamma t} + \norm{\Pe}_{L^{\infty}(I_T)}\norm{\theta_{2}}_{L^{\infty}(0,T)}t e^{2\gamma t}+ \norm{\theta_3}_{L^{\infty}(0,T)}
\end{align*}
that is
\begin{equation}
\label{eq:l-2-P}
\begin{split}
&\norm{\Pe(t,\cdot)}^2_{L^2(0,\infty)} + \eps^2\norm{\px\Pe(t,\cdot)}^2_{L^2(0,\infty)}\\
&\qquad+\frac{e^{2\gamma t}}{\gamma}\int_{0}^{t} e^{-2\gamma s} \left(\eps\ptx\Pe(s,0)+\eps\px\ue(s,0)-\frac{1}{2}\ges^2(s)\right)^2 ds\\
&\quad \le  C(T)\left(\norm{\Pe}_{L^{\infty}(I_T)}+1\right).
\end{split}
\end{equation}
Due to \eqref{eq:OHepsw}, \eqref{eq:10021}, \eqref{eq:l-2-P} and the H\"older inequality,
\begin{align*}
\Pe^2(t,x)&\le 2\int_{0}^{\infty}\vert\Pe\vert\vert\px\Pe\vert dx \le 2\norm{\Pe(t,\cdot)}_{L^2(0,\infty)} \norm{\px\Pe(t,\cdot)}_{L^2(0,\infty)}\\
&\le 2\sqrt{C(T)\left(\norm{\Pe}_{L^{\infty}(I_T)}+1\right)}\sqrt{C_{0}\left(e^{C_0 t}(1+t)+1\right)}\\
&\le C(T)\left(\norm{\Pe}_{L^{\infty}(I_T)}+1\right).
\end{align*}
Therefore,
\begin{equation}
\label{eq:equa-P-norm}
\norm{\Pe}^2_{L^{\infty}(I_T)} - C(T) \norm{\Pe}_{L^{\infty}(I_T)} - C(T)\le 0,
\end{equation}
which gives \eqref{eq:l-infty-P-1}.

\eqref{eq:l-2-P-1}, \eqref{eq:L-2-px-P-1}, \eqref{eq:l-2-px-u} follow from \eqref{eq:l-infty-P-1} and \eqref{eq:l-2-P}.

Let us show that \eqref{eq:pt-px-P} holds true.
Multiplying \eqref{eq:equat-per-P} by $\Pe$, an integration on $(0,\infty)$ gives
\begin{align*}
2\eps\int_{0}^{\infty}\Pe\ptx\Pe dx=& \frac{d}{dt}\norm{\Pe(t,\cdot)}^2_{L^2(0,\infty)}-2\gamma\int_{0}^{\infty}\Pe\Fe dx +\int_{0}^{\infty}\Pe \ue^2 dx\\
&-\ges^2(t)\int_{0}^{\infty}\Pe dx  -2\eps\int_{0}^{\infty}\Pe\px\ue dx \\
& + 2\eps \px\ue(t,0)\int_{0}^{\infty}\Pe dx+ 2\eps\ptx\Pe(t,0)\int_{0}^{\infty}\Pe dx.
\end{align*}
It follows from \eqref{eq:def-di-f}, \eqref{eq:lim-di-f}, \eqref{eq:342} and \eqref{eq:343} that
\begin{align*}
2\eps\int_{0}^{\infty}\Pe\ptx\Pe dx=& \frac{d}{dt}\norm{\Pe(t,\cdot)}^2_{L^2(0,\infty)}- \frac{\eps^2}{\gamma}(\ptx\Pe(t,0))^2\\
&- \frac{2\eps^2}{\gamma}\ptx\Pe(t,0)\px\ue(t,0)- \frac{\eps^2}{\gamma}(\px\ue(t,0))^2\\
&-\frac{1}{4\gamma}\ges^4(t) +\frac{\eps}{\gamma}\ptx\Pe(t,0)\ges^2(t)+\frac{\eps}{\gamma}\px\ue(t,0)\ges^2(t)\\
& +\int_{0}^{\infty}\Pe \ue^2dx -2\eps\int_{0}^{\infty}\Pe\px\ue dx\\
&-\frac{\eps}{\gamma}\ptx\Pe(t,0)\ges^2(t)-\frac{\eps}{\gamma}\px\ue(t,0)\ges^2(t)+\frac{1}{2\gamma}\ges^4(t)\\
&+2\frac{\eps^2}{\gamma}\ptx\Pe(t,0)\px\ue(t,0) +2\frac{\eps^2}{\gamma}(\px\ue(t,0))^2-\frac{\eps}{\gamma}\px\ue(t,0)\ges^2(t)\\
&+2\frac{\eps^2}{\gamma}(\ptx\Pe(t,0))^2 +2\frac{\eps^2}{\gamma}\ptx\Pe(t,0)\px\ue(t,0)-\frac{\eps}{\gamma}\ptx\Pe(t,0)\ges^2(t),
\end{align*}
that is,
\begin{align*}
2\eps\int_{0}^{\infty}\Pe\ptx\Pe dx=& \frac{d}{dt}\norm{\Pe(t,\cdot)}^2_{L^2(0,\infty)}\\
&+\frac{1}{\gamma}\left(\eps\ptx\Pe(t,0)+\eps\px\ue(t,0)-\frac{1}{2}\ges(t)^2\right)^2\\
&+\int_{0}^{\infty}\Pe \ue^2dx -2\eps\int_{0}^{\infty}\Pe\px\ue dx.
\end{align*}
An integration on $(0,t)$ gives
\begin{align*}
2\eps\int_{0}^{t}\!\!\!\int_{0}^{\infty}\Pe\ptx\Pe dsdx=&\norm{\Pe(t,\cdot)}^2_{L^2(0,\infty)}-\norm{P_{\eps,0}}^2_{L^2(0,\infty)}\\
&+\frac{1}{\gamma}\int_{0}^{t}\left(\eps\ptx\Pe(s,0)+\eps\px\ue(s,0)-\frac{1}{2}\ges^2(s)\right)^2 ds\\
&+\int_{0}^{t}\!\!\!\int_{0}^{\infty}\Pe \ue^2 dx -2\eps\int_{0}^{t}\!\!\!\int_{0}^{\infty}\Pe\px\ue dsdx.
\end{align*}
It follows from \eqref{eq:u0eps}, \eqref{eq:l-2-P-1} and \eqref{eq:399} that
\begin{align*}
2\eps\left\vert\int_{0}^{t}\!\!\!\int_{0}^{\infty}\Pe\ptx\Pe dsdx\right\vert\le&\norm{\Pe(t,\cdot)}^2_{L^2(0,\infty)}+\norm{P_{\eps,0}}^2_{L^2(0,\infty)}\\       &+\frac{1}{\gamma}\int_{0}^{t}\left(\eps\ptx\Pe(s,0)+\eps\px\ue(s,0)-\ges^2(s)\right)^2 ds\\
&+2\eps\int_{0}^{t}\!\!\!\int_{0}^{\infty}\vert\Pe\vert\vert\px\ue\vert dx+C(T)\\
\le&\norm{P_{0}}^2_{L^2(0,\infty)}\\
&+\frac{e^{2\gamma t}}{\gamma}\int_{0}^{t}e^{-2\gamma s}\left(\eps\ptx\Pe(s,0)+\eps\px\ue(s,0)-\frac{1}{2}\ges^2(s)\right)^2 ds\\
&+2\eps\int_{0}^{t}\!\!\!\int_{0}^{\infty}\vert\Pe\vert\vert\px\ue\vert dsdx+C(T)\\
\le & \norm{P_{0}}^2_{L^2(0,\infty)} +2\eps\int_{0}^{t}\!\!\!\int_{0}^{\infty}\vert\Pe\vert\vert\px\ue\vert dsdx+C(T)
\end{align*}
Due to \eqref{eq:l-2-P-1} and Young's inequality,
\begin{equation}
\label{eq:young}
\begin{split}
&2\eps\int_{0}^{\infty}\vert\Pe\vert\vert\px\ue\vert dx=2\int_{0}^{\infty}\vert\Pe\vert\vert\eps\px\ue\vert\\
&\quad\le \norm{\Pe(t,\cdot)}^2_{L^2(0,\infty)}+\eps^2\norm{\px\ue(t,\cdot)}^2_{L^2(0,\infty)}\\
&\quad\le C(T) + \eps^2\norm{\px\ue(t,\cdot)}^2_{L^2(0,\infty)}
\end{split}
\end{equation}
Thus, for \eqref{eq:ux-l-2} and \eqref{eq:young}, we have that
\begin{align*}
2\eps\int_{0}^{t}\!\!\!\int_{0}^{\infty}\vert\Pe\vert\vert\px\ue\vert dsdx\le \int_{0}^{t}\norm{\Pe(s,\cdot)}^2_{L^2(0,\infty)} ds +  \eps^2\int_{0}^{t}\norm{\px\ue(s,\cdot)}^2_{L^2(0,\infty)} ds\le C(T)  .
\end{align*}
Therefore,
\begin{equation*}
2\eps\left\vert\int_{0}^{t}\!\!\!\int_{0}^{\infty}\Pe\ptx\Pe dsdx\right\vert\le \norm{P_{0}}^2_{L^2(0,\infty)}+C(T),
\end{equation*}
which gives \eqref{eq:pt-px-P}.
\end{proof}

\begin{lemma}
\label{lm:linfty-u}
Let $T>0$. Then,
\begin{equation}
\label{eq:linfty-u}
\norm{\ue}_{L^\infty(I_{T})}\le\norm{u_0}_{L^\infty(0,\infty)}+C(T),
\end{equation}
where $I_{T}$ is defined in \eqref{eq:def-di-I}.
\end{lemma}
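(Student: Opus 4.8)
The plan is to derive \eqref{eq:linfty-u} from the parabolic maximum principle applied to the first equation of \eqref{eq:OHepsw}, using two facts: the transport term $\ue\px\ue$ is a first-order term that vanishes at an interior spatial extremum, and the source term $\gamma\Pe$ is already controlled in $L^\infty(I_T)$, independently of $\eps$, by \eqref{eq:l-infty-P-1}. The boundary and initial data of $\ue$ are bounded by $\norm{u_0}_{L^\infty(0,\infty)}+C_0$ thanks to \eqref{eq:u0eps}, so a linear-in-$t$ corrector that dominates the source will give the stated bound with the sharp coefficient $1$ on $\norm{u_0}_{L^\infty(0,\infty)}$.

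Fix $T>0$ and set $K:=\gamma\norm{\Pe}_{L^\infty(I_T)}$, so that $K\le C(T)$ by \eqref{eq:l-infty-P-1}. For $\delta>0$ consider on $I_T$ the function $\phi_\delta(t,x):=\ue(t,x)-(K+\delta)t$. Using the first equation of \eqref{eq:OHepsw} one computes
\[
\pt\phi_\delta+\ue\px\phi_\delta-\eps\pxx\phi_\delta=\big(\pt\ue+\ue\px\ue-\eps\pxx\ue\big)-(K+\delta)=\gamma\Pe-(K+\delta)\le-\delta<0,
\]
so $\phi_\delta$ is a strict subsolution of a linear parabolic operator whose zero-order coefficient is zero and whose drift coefficient $\ue$ is continuous. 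On the parabolic boundary of $I_T$ one has $\phi_\delta(0,x)=u_{\eps,0}(x)\le\norm{u_0}_{L^\infty(0,\infty)}$ and $\phi_\delta(t,0)=\ges(t)-(K+\delta)t\le\norm{\ges}_{L^\infty(0,\infty)}\le C_0$ by \eqref{eq:u0eps}, while by the regularity and spatial decay of $\ue$ (the same properties already used in Lemma \ref{lm:cns} to obtain $\Pe(t,\infty)=\px\Pe(t,\infty)=0$ and $\lim_{x\to\infty}(\tfrac12\ue^2-\eps\px\ue)=0$) one has $\phi_\delta(t,x)\to-(K+\delta)t\le0$ as $x\to\infty$, uniformly on $[0,T]$.

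If $\phi_\delta$ attained its supremum over $\overline{I_T}$ at an interior point $(t_0,x_0)$ with $t_0\in(0,T]$ and $x_0>0$, then $\px\phi_\delta(t_0,x_0)=0$, $\pxx\phi_\delta(t_0,x_0)\le0$ and $\pt\phi_\delta(t_0,x_0)\ge0$, contradicting the displayed strict inequality. Running this comparison first on the truncated cylinders $(0,T)\times(0,R)$ and letting $R\to\infty$, we obtain $\phi_\delta\le\max\{\norm{u_0}_{L^\infty(0,\infty)},C_0,0\}=\norm{u_0}_{L^\infty(0,\infty)}+C_0$ on $I_T$, i.e. $\ue(t,x)\le\norm{u_0}_{L^\infty(0,\infty)}+C_0+(K+\delta)t$; letting $\delta\to0$ and recalling $K\le C(T)$ yields $\ue\le\norm{u_0}_{L^\infty(0,\infty)}+C(T)$ on $I_T$. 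Applying the identical argument to the strict supersolution $\psi_\delta(t,x):=\ue(t,x)+(K+\delta)t$, whose infimum over $\overline{I_T}$ is likewise attained on the parabolic boundary, gives the matching lower bound $\ue\ge-\norm{u_0}_{L^\infty(0,\infty)}-C(T)$, and \eqref{eq:linfty-u} follows.

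The main obstacle is the rigorous justification of the comparison argument on the unbounded spatial domain $(0,\infty)$: one must ensure that $\ue(t,\cdot)$ decays as $x\to\infty$ in a way uniform on $[0,T]$, so that the supremum of $\phi_\delta$ is not lost at infinity, and this is precisely the point where the classical regularity and decay of $\ue$ from \cite{CHK:ParEll} (as already exploited in Lemma \ref{lm:cns}) enters. Granting that, everything else is the routine weak maximum principle for $\pt-\eps\pxx+\ue\px$ together with the linear-in-$t$ corrector; note also that the estimate is automatically $\eps$-uniform because the only quantities entering the final constant are $\norm{u_0}_{L^\infty(0,\infty)}$, $C_0$, and $\norm{\Pe}_{L^\infty(I_T)}\le C(T)$.
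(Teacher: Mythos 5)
Your proposal is correct and follows essentially the same route as the paper: both rest on the comparison principle for the parabolic operator $\pt+\ue\px-\eps\pxx$ with a linear-in-$t$ barrier built from the uniform bound $\norm{\Pe}_{L^{\infty}(I_T)}\le C(T)$ of \eqref{eq:l-infty-P-1}. Your version is in fact slightly more careful than the paper's, since you explicitly check the lateral boundary datum $\ue(t,0)=\ges(t)$ and the behaviour as $x\to\infty$, which the paper's proof passes over.
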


\begin{proof}
Due to \eqref{eq:OHepsw} and \eqref{eq:l-infty-P-1},
\begin{equation*}
\pt \ue +\ue\px\ue-\eps\pxx \ue\le \gamma C(T).
\end{equation*}
Since the map
\begin{equation*}
{\mathcal F}(t):=\norm{u_0}_{L^\infty(0,\infty)}+\gamma C(T)t,
\end{equation*}
solves the equation
\begin{equation*}
\frac{d{\mathcal F}}{dt}=\gamma C(T)
\end{equation*}
and
\begin{equation*}
\max\{\ue(0,x),0\}\le {\mathcal F}(t),\qquad (t,x)\in I_T,
\end{equation*}
the comparison principle for parabolic equations implies that
\begin{equation*}
 \ue(t,x)\le {\mathcal F}(t),\qquad (t,x)\in I_T.
\end{equation*}
In a similar way we can prove that
\begin{equation*}
\ue(t,x)\ge -{\mathcal F}(t),\qquad (t,x)\in I_T.
\end{equation*}
Therefore,
\begin{equation*}
\vert\ue(t,x)\vert\le\norm{u_0}_{L^\infty(0,\infty)}+\gamma C(T)t\le  \norm{u_0}_{L^\infty(0,\infty)}+C(T),
\end{equation*}
which gives \eqref{eq:linfty-u}.
\end{proof}

\section{Proof of Theorem \ref{th:main}}
\label{sec:proof}

This section is devoted to the proof of Theorem \ref{th:main}.

Let us begin by proving the existence of  a distributional solution
to  \eqref{eq:OH}, \eqref{eq:boundary}, \eqref{eq:init}  satisfying \eqref{eq:ent2}.
\begin{lemma}\label{lm:conv}
Let $T>0$. There exists a function $u\in L^{\infty}((0,T)\times (0,\infty))$ that is a distributional
solution of \eqref{eq:OHw} and satisfies  \eqref{eq:ent2}.
\end{lemma}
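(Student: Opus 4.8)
The plan is to pass to the limit in the vanishing viscosity approximation \eqref{eq:OHepsw}, using the $\eps$-uniform bounds established in Lemmas \ref{lm:cns}--\ref{lm:linfty-u}. First I would record the two crucial compactness facts: by Lemma \ref{lm:linfty-u} the family $\{\ue\}_{0<\eps<1}$ is bounded in $L^\infty(I_T)$ for each $T>0$, and by Lemma \ref{lm:P-infty} the family $\{\Pe\}$ is bounded in $L^\infty(I_T)$ with $\{\px\Pe\}$ bounded in $L^\infty((0,T);L^2(0,\infty))$; the equation $-\eps\pxx\Pe+\px\Pe=\ue$ then gives a bound on $\pxx\Pe$ in a weak norm, so that $\Pe\to P$ strongly in $L^2_{\loc}$ (up to a subsequence) with $\px P=u$ in the limit, recovering the second equation of \eqref{eq:OHw}. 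Thus the whole difficulty is the strong compactness of $\{\ue\}$ itself, needed to pass to the limit in the nonlinear term $\ue\px\ue=\px(\ue^2/2)$ and in the Kru\v{z}kov-type entropy terms.

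The main obstacle is therefore establishing strong $L^1_{\loc}$ (or a.e.) convergence of a subsequence $\uek\to u$. Since we only control $\sqrt\eps\,\px\ue$ in $L^2((0,T)\times(0,\infty))$ (estimate \eqref{eq:ux-l-2}) and have no $BV$ bound, the natural tool is the compensated compactness / Tartar--Murat method applied to the scalar conservation law with source. Concretely, for any pair of entropy--entropy flux $(\eta,q)$ with $\eta\in C^2$, one writes
\begin{equation*}
\pt\eta(\ue)+\px q(\ue)=\gamma\eta'(\ue)\Pe+\eps\pxx\eta(\ue)-\eps\eta''(\ue)(\px\ue)^2,
\end{equation*}
and shows the right-hand side splits into a term bounded in $L^1_{\loc}$ (hence precompact in $W^{-1,p}_{\loc}$, $p<\infty$, by Murat's lemma, using $\gamma\eta'(\ue)\Pe$ bounded in $L^\infty$ and $\eps\eta''(\ue)(\px\ue)^2$ bounded in $L^1$ by \eqref{eq:ux-l-2}) plus a term $\eps\pxx\eta(\ue)$ converging to zero in $H^{-1}_{\loc}$. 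By the Murat--Tartar interpolation lemma, $\pt\eta(\ue)+\px q(\ue)$ is then precompact in $H^{-1}_{\loc}$; the div--curl lemma applied to the Young measure $\nu_{t,x}$ generated by $\{\ue\}$ yields the Tartar commutation relation, and because the flux $u\mapsto u^2/2$ is strictly convex (genuinely nonlinear) this forces $\nu_{t,x}$ to be a Dirac mass, i.e. $\uek\to u$ a.e.\ and in $L^p_{\loc}$ for all $p<\infty$.

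With strong convergence in hand the rest is routine. I would pass to the limit in the weak formulation of the first equation of \eqref{eq:OHepsw} tested against $\varphi\in C^2_c$: the viscous term $\eps\pxx\ue$ vanishes, $\ue\px\ue=\px(\ue^2/2)\to\px(u^2/2)$ by strong $L^2_{\loc}$ convergence, and $\gamma\Pe\to\gamma P$, giving that $u$ is a distributional solution of \eqref{eq:OHw}; the boundary and initial conditions pass through using \eqref{eq:u0eps} and the trace theory invoked after \eqref{eq:OHentropy} (note the strong trace $u^\tau_0$ exists because, the limit $u$ being an entropy solution, \eqref{eq:OHentropy} holds and \cite[Theorem 1.1]{CKK} applies). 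Finally, for \eqref{eq:ent2} one multiplies the first equation of \eqref{eq:OHepsw} by $\eta_c'(\ue)\varphi$ with $\eta_c$ a smooth convex approximation of $|\cdot-c|$, integrates by parts over $(0,\infty)\times(0,\infty)$, keeps the favourable sign of $-\eps\eta_c''(\ue)(\px\ue)^2\le 0$ and of the boundary contribution, uses the $\eps$-uniform bounds to control remainders, and lets $\eps\to0$ and then the regularization of $\eta_c$ degenerate to $|\cdot-c|$; the boundary term produces exactly the $\sgn{g(t)-c}\big((u^\tau_0(t))^2/2-c^2/2\big)\varphi(t,0)$ term in \eqref{eq:ent2} since $\ue(t,0)=\ges(t)\to g(t)$.
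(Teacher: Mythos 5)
Your overall strategy --- compensated compactness for the interior convergence, then passage to the limit in the entropy formulation --- is the same as the paper's, and the interior part of your argument (the decomposition of $\pt\eta(\ue)+\px q(\ue)$ into an $H^{-1}$-vanishing piece and an $L^1$-bounded piece, Murat's lemma, Tartar's reduction of the Young measure via genuine nonlinearity of $u\mapsto u^2/2$, and the recovery of $P$ from $\Pe(t,x)=\int_0^x\ue\,dy+\eps\px\Pe(t,x)-\eps\px\Pe(t,0)$ together with the $\sqrt{\eps}$-bound on $\px\Pe$) is correct and matches the paper's Lemma \ref{lm:conv-u}.

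The gap is in the boundary term of \eqref{eq:ent2}. When you multiply the first equation of \eqref{eq:OHepsw} by $\eta_c'(\ue)\varphi$ and integrate by parts, the boundary $x=0$ contributes two terms: the convective flux $q_c(\ue(t,0))\varphi(t,0)$, which by $\ue(t,0)=\ges(t)\to g(t)$ converges to $\sgn{g(t)-c}\left(g^2(t)/2-c^2/2\right)\varphi(t,0)$ (note: $g^2$, not $(u^\tau_0)^2$), and the viscous boundary flux $-\eps\,\eta_c'(\ges(t))\,\px\ue(t,0)\,\varphi(t,0)$. This second term has no favourable sign and does \emph{not} vanish as $\eps\to0$: a boundary layer generically forms at $x=0$, so $\eps\px\ue(t,0)$ has a nontrivial limit. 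Your claim that the boundary term "produces exactly the $\sgn{g(t)-c}\left((u^\tau_0(t))^2/2-c^2/2\right)\varphi(t,0)$ term since $\ue(t,0)=\ges(t)\to g(t)$" therefore does not follow from anything you have written; the crux of the proof is the identification $\lim_{\epsk}\epsk\int_0^\infty \px\uek(t,0)\phi(t,0)\,dt=\int_0^\infty\left(g^2(t)/2-(u^\tau_0(t))^2/2\right)\phi(t,0)\,dt$, which, combined with the factor $\sgn{g_{\epsk}(t)-c}$ and the convective contribution, yields the correct boundary integrand. The paper proves this identity (its \eqref{eq:ux-in-0}) by testing the equation with $\rho_\nu(x)\phi(t,x)$, where $\rho_\nu$ is a cutoff supported in $[0,1/\nu]$ with $\rho_\nu(0)=1$, passing first $k\to\infty$ (using the already-established strong convergence and the existence of the strong trace $u^\tau_0$ from \cite{CKK}) and then $\nu\to\infty$. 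Without this step, or an equivalent argument in the spirit of Bardos--LeRoux--N\`ed\`elec, the entropy inequality \eqref{eq:ent2} with the trace $(u^\tau_0)^2$ in the boundary integral is not established.
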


We  construct a solution by passing
to the limit in a sequence $\Set{u_{\eps}}_{\eps>0}$ of viscosity
approximations \eqref{eq:OHepsw}. We use the
compensated compactness method \cite{TartarI}.

\begin{lemma}\label{lm:conv-u}
Let $T>0$. There exists a subsequence
$\{\uek\}_{k\in\N}$ of $\{\ue\}_{\eps>0}$
and a limit function $  u\in L^{\infty}((0,T)\times(0,\infty))$
such that
\begin{equation}\label{eq:convu}
    \textrm{$\uek \to u$ a.e.~and in $L^{p}_{loc}((0,T)\times(0,\infty))$, $1\le p<\infty$}.
\end{equation}
Moreover, we have
\begin{equation}
\label{eq:conv-P}
\textrm{$\Pek \to P$ a.e.~and in $L^{p}_{loc}(0,T;W^{1,p}_{loc}(0,\infty))$, $1\le p<\infty$},
\end{equation}
where
\begin{equation}
\label{eq:tildeu}
P(t,x)=\int_0^x u(t,y)dy,\qquad t\ge 0,\quad x\ge 0,
\end{equation}
and \eqref{eq:ent2} holds true.
\end{lemma}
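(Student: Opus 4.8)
The plan is to pass to the limit $\eps\to0$ in the viscous problem \eqref{eq:OHepsw} by the compensated–compactness method, using only the a priori bounds of Section \ref{sec:vv}. Write $f(u)=u^2/2$. By \eqref{eq:linfty-u} and \eqref{eq:l-infty-P-1} the families $\seq{\ue}$ and $\seq{\Pe}$ are bounded in $L^\infty(I_T)$, so I would first extract a subsequence $\eps_k\to0$ and a Young measure $(\nu_{t,x})_{(t,x)\in I_T}$, supported in a fixed compact set $K\subset\R$, with $w(\uek)\weakstar\La\nu_{t,x},w\Ra$ in $L^\infty(I_T)$ for every $w\in C(\R)$, and (after a further subsequence) $\Pek\weakstar P$ in $L^\infty(I_T)$. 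The main point is the $\Hneg(I_T)$–compactness of the entropy production: for $\eta\in C^2(\R)$ with companion flux $q$ ($q'=f'\eta'$), multiplying the first equation of \eqref{eq:OHepsw} by $\eta'(\ue)$ gives
\begin{equation*}
\pt\eta(\ue)+\px q(\ue)=\gamma\,\eta'(\ue)\Pe+\eps\,\px\!\bigl(\eta'(\ue)\px\ue\bigr)-\eps\,\eta''(\ue)(\px\ue)^2 .
\end{equation*}
Here $\eta(\ue),q(\ue)$ are bounded in $L^\infty(I_T)$, so the left-hand side is bounded in $W^{-1,\infty}_{\loc}(I_T)$; by \eqref{eq:ux-l-2}, $\norm{\eps\,\eta'(\ue)\px\ue}^2_{L^2(I_T)}\le\norm{\eta'}^2_{L^\infty(K)}\,\eps\bigl(\eps\!\int_0^T\!\norm{\px\ue(s,\cdot)}^2_{L^2(0,\infty)}ds\bigr)\to0$, so $\eps\,\px(\eta'(\ue)\px\ue)\to0$ in $\Hneg(I_T)$; by \eqref{eq:ux-l-2} again $\eps\,\eta''(\ue)(\px\ue)^2$ is bounded in $L^1(I_T)\subset\CMloc(I_T)$; and $\gamma\,\eta'(\ue)\Pe$ is bounded in $L^\infty(I_T)\subset\CMloc(I_T)$ by \eqref{eq:l-infty-P-1} and \eqref{eq:linfty-u}. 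Murat's lemma (see \cite{TartarI}) then yields the claimed compactness.

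Next I would apply the div--curl lemma to the pairs $(\eta_1(\uek),q_1(\uek))$ and $(-q_2(\uek),\eta_2(\uek))$ for two entropy pairs, obtaining the Tartar commutation relation $\La\nu_{t,x},\eta_1q_2-\eta_2q_1\Ra=\La\nu_{t,x},\eta_1\Ra\La\nu_{t,x},q_2\Ra-\La\nu_{t,x},\eta_2\Ra\La\nu_{t,x},q_1\Ra$ for a.e.\ $(t,x)\in I_T$ and all entropy pairs. Since $f''\equiv1$, $f$ is affine on no interval, so DiPerna's reduction argument forces $\nu_{t,x}=\delta_{u(t,x)}$ a.e., with $u(t,x):=\La\nu_{t,x},\mathrm{id}\Ra\in L^\infty(I_T)$; hence $\uek\to u$ in measure on every bounded subset of $I_T$, and therefore — along a further subsequence and using the $L^\infty$ bound — a.e.\ and in $L^p_{\loc}(I_T)$, $1\le p<\infty$, which is \eqref{eq:convu}. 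For \eqref{eq:conv-P}–\eqref{eq:tildeu} I would solve the elliptic equation $-\eps\pxx\Pe+\px\Pe=\ue$ using the decay $\px\Pe(t,\infty)=0$ from \eqref{eq:P-pxP-intfy}: for each $t$,
\begin{equation*}
\px\Pe(t,x)=\frac{1}{\eps}\int_x^\infty e^{-(y-x)/\eps}\ue(t,y)\,dy=\int_0^\infty e^{-z}\,\ue(t,x+\eps z)\,dz ,
\end{equation*}
so that $\px\Pe(t,\cdot)$ is the convolution of $\ue(t,\cdot)$ with the one–sided approximate identity $\tfrac1\eps e^{-z/\eps}\mathbf{1}_{\{z>0\}}$; since $\uek\to u$ in $L^p_{\loc}(I_T)$ and translations are continuous in $L^p$, $\px\Pek\to u$ in $L^p_{\loc}(I_T)$, and integrating in $x$ from $\Pe(t,0)=0$ gives $\Pek\to P$ with $P(t,x)=\int_0^x u(t,y)\,dy$, strongly in $L^p_{\loc}(0,T;W^{1,p}_{\loc}(0,\infty))$ and, up to a subsequence, a.e.

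Finally, to pass to the limit in the entropy inequality I would fix $c\in\R$ and take convex $\eta_\delta\in C^2(\R)$ with $\eta_\delta\to\abs{\cdot-c}$, $\eta_\delta'\to\sgn{\cdot-c}$ and companion flux $q_\delta\to\sgn{\cdot-c}(f(\cdot)-f(c))$. Using $\eta_\delta''\ge0$, $\pt\eta_\delta(\ue)+\px q_\delta(\ue)\le\gamma\,\eta_\delta'(\ue)\Pe+\eps\,\pxx\eta_\delta(\ue)$; multiplying by a nonnegative $\phi\in C^2(\R^2)$ of compact support and integrating over $(0,\infty)\times(0,\infty)$, the interior $\eps$–terms vanish as $\eps_k\to0$ by \eqref{eq:ux-l-2} exactly as above, and the remaining interior and initial terms converge by \eqref{eq:convu}, \eqref{eq:conv-P}, \eqref{eq:u0eps} and $\ges\to g$ (recall \eqref{eq:ass-g}). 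Testing first with $\phi\in C^\infty_c((0,\infty)\times(0,\infty))$ and then letting $\delta\to0$ gives the interior entropy inequality \eqref{eq:OHentropy} for $u$, which by \cite[Theorem~1.1]{CKK} provides the strong boundary trace $u^\tau_0$; testing with general compactly supported $\phi\in C^2(\R^2)$ and carefully tracking the contributions at $x=0$ and $t=0$ then yields \eqref{eq:ent2}. I expect this last step to be the main obstacle: one must show that the $\eps$–order boundary terms at $x=0$ — in particular the one carrying $\eps\,\px\ue(t,0)$, which is not controlled by the $L^2$ estimates alone — tend to zero (this requires an extra trace estimate, obtainable from \eqref{eq:int-u}, \eqref{eq:l-2-px-u} and the equation restricted to $x=0$), and that the limiting boundary flux term, built from the viscous datum $\ges\to g$, reconciles with $\int_0^\infty\sgn{g(t)-c}\bigl(\tfrac{(u^\tau_0(t))^2}{2}-\tfrac{c^2}{2}\bigr)\phi(t,0)\,dt$, i.e.\ with the Bardos--LeRoux--N\'ed\'elec-type boundary condition in the sense of \cite{BRN}; once the $\Hneg$–compactness is in hand, everything else is routine.
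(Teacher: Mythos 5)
The interior part of your argument (Young measures, Murat's lemma, the div--curl/Tartar commutation relation, and the DiPerna reduction using $f''\equiv 1$) is correct and is essentially the paper's route. Your derivation of \eqref{eq:conv-P} via the explicit one-sided kernel $\tfrac1\eps e^{-z/\eps}\mathbf{1}_{\{z>0\}}$ is a valid (and slightly slicker) alternative to the paper's, which instead integrates the elliptic equation to get $\Pe(t,x)=\int_0^x\ue\,dy+\eps\px\Pe(t,x)-\eps\px\Pe(t,0)$ and kills the $\eps$-terms uniformly via \eqref{eq:10021}.

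The genuine gap is in your plan for the boundary term in \eqref{eq:ent2}. You propose to show that the term carrying $\eps\px\uek(t,0)$ tends to zero and then to ``reconcile'' the surviving flux $\int\sgn{g-c}\bigl(\tfrac{g^2}{2}-\tfrac{c^2}{2}\bigr)\phi(t,0)\,dt$ with the required $\int\sgn{g-c}\bigl(\tfrac{(u^\tau_0)^2}{2}-\tfrac{c^2}{2}\bigr)\phi(t,0)\,dt$. These two steps cannot both succeed unless $(u^\tau_0)^2=g^2$ a.e., which fails in general because of boundary layers: the Bardos--LeRoux--N\'ed\'elec condition does not force $u^\tau_0=g$. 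In fact the viscous boundary flux does \emph{not} vanish. Testing the first equation of \eqref{eq:OHepsw} with $\rho_\nu(x)\phi(t,x)$, where $\rho_\nu$ is a cut-off supported in $\{x<1/\nu\}$ with $\rho_\nu(0)=1$, passing $k\to\infty$ and then $\nu\to\infty$ (using the strong trace $u^\tau_0$ furnished by \cite[Theorem 1.1]{CKK}) identifies
\begin{equation*}
\lim_{k}\ \eps_k\int_0^\infty\px\uek(t,0)\,\phi(t,0)\,dt=\int_0^\infty\Bigl(\frac{g^2(t)}{2}-\frac{(u^\tau_0(t))^2}{2}\Bigr)\phi(t,0)\,dt,
\end{equation*}
and, since $\uek(t,0)=g_{\eps_k}(t)\to g(t)$, the term $\eps_k\px\vert\uek(t,0)-c\vert=\sgn{g_{\eps_k}(t)-c}\,\eps_k\px\uek(t,0)$ converges (weakly against $\phi(\cdot,0)$) to $\sgn{g(t)-c}\bigl(\tfrac{g^2}{2}-\tfrac{(u^\tau_0)^2}{2}\bigr)$. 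Combined with the flux term $\int\sgn{g_{\eps_k}-c}\bigl(\tfrac{g_{\eps_k}^2}{2}-\tfrac{c^2}{2}\bigr)\phi(t,0)\,dt$ coming from the integration by parts, this is precisely what produces $(u^\tau_0)^2$ rather than $g^2$ in \eqref{eq:ent2}. Note also that \eqref{eq:l-2-px-u} only bounds the combination $\eps\ptx\Pe(s,0)+\eps\px\ue(s,0)-\tfrac12\ges^2(s)$ in a weighted $L^2$ sense, so it cannot supply the decay of $\eps\px\ue(\cdot,0)$ that your plan requires.
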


\begin{proof}
Let $\eta:\R\to\R$ be any convex $C^2$ entropy function, and
$q:\R\to\R$ be the corresponding entropy
flux defined by $q'=f'\eta'$.
By multiplying the first equation in \eqref{eq:OHepsw} with
$\eta'(\ue)$ and using the chain rule, we get
\begin{equation*}
    \pt  \eta(\ue)+\px q(\ue)
    =\underbrace{\eps \pxx \eta(\ue)}_{=:\CLea_{1,\eps}}
    \, \underbrace{-\eps \eta''(\ue)\left(\px  \ue\right)^2}_{=: \CLea_{2,\eps}}
     \, \underbrace{+\gamma\eta'(\ue) \Pe}_{=: \CLea_{3,\eps}},
\end{equation*}
where  $\CLea_{1,\eps}$, $\CLea_{2,\eps}$, $\CLea_{3,\eps}$ are distributions.

Let us show that
\begin{equation*}
\label{eq:H1}
\textrm{$\CLea_{1,\eps}\to 0$ in $H^{-1}((0,T)\times(0,\infty))$, $T>0$.}
\end{equation*}
Since
\begin{equation*}
\eps\pxx\eta(\ue)=\px(\eps\eta'(\ue)\px\ue),
\end{equation*}
for \eqref{eq:ux-l-2} and Lemma \ref{lm:linfty-u},
\begin{align*}
\norm{\eps\eta'(\ue)\px\ue}^2_{L^2((0,T)\times (0,\infty))}&\le\eps ^2\norm{\eta'}^2_{L^{\infty}(J_T)}\int_{0}^{T}\norm{\px\ue(s,\cdot)}^2_{L^2(0,\infty)}ds\\
&\le\eps\norm{\eta'}^2_{L^{\infty}(J_T)}C(T)\to 0,
\end{align*}
where
\begin{equation*}
J_T=\left(-\norm{u_0}_{L^\infty(0,\infty)}- C(T), \norm{u_0}_{L^\infty(0,\infty)}+C(T)\right).
\end{equation*}

We claim that
\begin{equation*}
\label{eq:L1}
\textrm{$\{\CLea_{2,\eps}\}_{\eps>0}$ is uniformly bounded in $L^1((0,T)\times(0,\infty))$, $T>0$}.
\end{equation*}
Again by \eqref{eq:ux-l-2} and Lemma \ref{lm:linfty-u},
\begin{align*}
\norm{\eps\eta''(\ue)(\px\ue)^2}_{L^1((0,T)\times (0,\infty))}&\le
\norm{\eta''}_{L^{\infty}(J_T)}\eps
\int_{0}^{T}\norm{\px\ue(s,\cdot)}^2_{L^2(0,\infty)}ds\\
&\le \norm{\eta''}_{L^{\infty}(J_T)}C(T).
\end{align*}
We have that
\begin{equation*}
\textrm{$\{\CL_{3,\eps}\}_{\eps>0}$ is uniformly bounded in $L^1_{loc}((0,T)\times (0,\infty))$, $T>0$.}
\end{equation*}
Let $K$ be a compact subset of $(0,T)\times (0,\infty)$. For Lemmas \ref{lm:P-infty} and \ref{lm:linfty-u},
\begin{align*}
\norm{\gamma\eta'(\ue)\Pe}_{L^1(K)}&=\gamma\int_{K}\vert\eta'(\ue)\vert\vert\Pe\vert
dtdx\\
&\leq \gamma
\norm{\eta'}_{L^{\infty}(J_T)}\norm{\Pe}_{L^{\infty}(I_{T})}\vert K \vert .
\end{align*}
Therefore, Murat's lemma \cite{Murat:Hneg} implies that
\begin{equation}
\label{eq:GMC1}
    \text{$\left\{  \pt  \eta(\ue)+\px q(\ue)\right\}_{\eps>0}$
    lies in a compact subset of $\Hneg((0,T)\times(0,\infty))$.}
\end{equation}
The $L^{\infty}$ bound stated in Lemma \ref{lm:linfty-u}, \eqref{eq:GMC1}, and the
 Tartar's compensated compactness method \cite{TartarI} give the existence of a subsequence
$\{\uek\}_{k\in\N}$ and a limit function $  u\in L^{\infty}((0,T)\times(0,\infty)),\,T>0,$
such that \eqref{eq:convu} holds.

Let us prove that \eqref{eq:conv-P} holds true.\\
We show that
\begin{equation}
\label{eq:px-1}
\textrm{$\eps\px\Pe(t,x)\to 0$ in $L^{\infty}(0,T; L^{\infty}(0,\infty))$, $T>0$.}
\end{equation}
It follows from \eqref{eq:10021} that
\begin{equation*}
\eps\norm{\px\Pe}_{L^{\infty}(0,T; L^{\infty}(0,\infty))}\leq \sqrt{\eps} \sqrt{C_{0}\left(e^{C_{0}T}(1+T) + 1\right)}=\sqrt{\eps}C(T)\to 0,
\end{equation*}
that is \eqref{eq:px-1}.\\
Then, \eqref{eq:P-int-in-0}, \eqref{eq:convu}, \eqref{eq:px-1} and  the H\"older inequality give \eqref{eq:conv-P}.

Finally, we prove \eqref{eq:ent2}.\\
Let $k\in\N$, $c\in\R$ be a constant, and $\phi\in C^{\infty}(\R^2)$ be a nonnegative test function with compact support. Multiplying the first equation of \eqref{eq:OHepsw} by $\sgn{\ue-c}$, we have
\begin{align*}
\pt \vert \uek -c\vert&+\px\left(\sgn{\uek-c}\left(\frac{\uek^2}{2}-\frac{c^2}{2}\right)\right)\\
&-\gamma\sgn{\uek-c}\Pek-\epsk\pxx\vert \uek -c\vert \le 0.
\end{align*}
Multiplying by $\phi$ and integrating over $(0,\infty)^2$, we get
\begin{align*}
&\int_{0}^{\infty}\!\!\!\!\int_{0}^{\infty}\left(\vert \uek -c\vert \pt\phi+\left(\sgn{\uek-c}\left(\frac{\uek^2}{2}-\frac{c^2}{2}\right)\right)\px\phi\right)dtdx\\
&\qquad +\gamma \int_{0}^{\infty}\!\!\!\!\int_{0}^{\infty}\sgn{\uek-c}\Pek dtdx -\epsk  \int_{0}^{\infty}\!\!\!\!\int_{0}^{\infty}\px\vert \uek -c\vert\px\phi dtdx\\
&\qquad + \int_{0}^{\infty}\vert u_{0}(x) -c\vert\phi(0,x) dx + \int_{0}^{\infty}\sgn{g_{\eps_{k}}(t)-c}\left(\frac{g^2_{\eps_{k}}(t)}{2}-\frac{c^2}{2}\right)\phi(t,0)dt\\
&\qquad -\epsk\int_{0}^{\infty}\px\vert \uek(t,0) -c\vert\phi(t,0)dt \ge 0.
\end{align*}
Since
\begin{equation*}
\textrm{$g_{\epsk}(t)\to g(t)$ in  $W^{1,\infty}(0,\infty)$},
\end{equation*}
thanks to Lemmas \ref{lm:stima-l-2}, \ref{lm:P-infty} and \ref{lm:linfty-u}, when $k\to \infty$, we have
\begin{align*}
&\int_{0}^{\infty}\!\!\!\!\int_{0}^{\infty}\left(\vert u -c\vert \pt\phi+\left(\sgn{u-c}\left(\frac{u^2}{2}-\frac{c^2}{2}\right)\right)\px\phi\right)dtdx\\
&\qquad +\gamma \int_{0}^{\infty}\!\!\!\!\int_{0}^{\infty}\sgn{u-c}P dtdx + \int_{0}^{\infty}\vert u_{0}(x) -c\vert\phi(0,x) dx\\
&\qquad +\int_{0}^{\infty}\sgn{g(t)-c}\left(\frac{g^2(t)}{2}-\frac{c^2}{2}\right)\phi(t,0)dt\\
&\qquad -\lim_{\epsk}\epsk\int_{0}^{\infty}\px\vert \uek(t,0) -c\vert\phi(t,0)dt \ge 0.
\end{align*}
We have to prove that (see \cite{BRN})
\begin{equation}
\label{eq:ux-in-0}
\begin{split}
&\lim_{\epsk}\epsk\int_{0}^{\infty}\px\vert \uek(t,0) -c\vert\phi(t,0)dt\\
&\qquad = \int_{0}^{\infty}\sgn{g(t)-c}\left(\frac{g^2(t)}{2}- \frac{(u^\tau_0(t))^2}{2}\right)\phi(t,0)dt.
\end{split}
\end{equation}
Let $\{\rho_{\nu}\}_{\nu\in\N}\subset C^{\infty}(\R)$ be such that
\begin{equation}
0\le \rho_{\nu} \le 1, \quad \rho_{\nu}(0)=1, \quad \vert \rho'_{\nu}\vert \le 1, \quad x\ge \frac{1}{\nu} \quad \Longrightarrow \quad \rho_{\nu}(x)=0.
\end{equation}
Using $(t,x) \mapsto \rho_{\nu}(x)\phi(t,x)$ as test function for the first equation of \eqref{eq:OHepsw} we get
\begin{align*}
&\int_{0}^{\infty}\!\!\!\!\int_{0}^{\infty}\left(\uek\pt\phi\rho_{\nu}+\frac{\uek^2}{2}\px\phi\rho_{\nu} + \frac{\uek^2}{2}\phi\rho'_{\nu}\right)dtdx +\gamma\int_{0}^{\infty}\!\!\!\!\int_{0}^{\infty}\Pek\phi\rho_{\nu} dtdx\\
&\qquad -\epsk\int_{0}^{\infty}\!\!\!\!\int_{0}^{\infty}\px\uek\left(\px\phi\rho_{\nu}+\phi\rho'_{\nu}\right)dtdx + \int_{0}^{\infty} u_{0}(x)\phi(0,x)\rho_{\nu}(x)dx\\
&\qquad +\int_{0}^{\infty} \frac{g^2_{\epsk}(t)}{2}\phi(t,0) dt -\epsk\int_{0}^{\infty}\px\uek(t,0)\phi(t,0) dt=0.
\end{align*}
As $k\to\infty$, we obtain that
\begin{align*}
&\int_{0}^{\infty}\!\!\!\!\int_{0}^{\infty}\left(u\pt\phi\rho_{\nu}+\frac{u^2}{2}\px\phi\rho_{\nu} + \frac{u^2}{2}\phi\rho'_{\nu}\right)dtdx +\gamma\int_{0}^{\infty}\!\!\!\!\int_{0}^{\infty}P\phi\rho_{\nu}dtdx\\
&\qquad + \int_{0}^{\infty} u_{0}(x)\phi(0,x)\rho_{\nu}dx +\int_{0}^{\infty} \frac{g^2(t)}{2}\phi(t,0) dt\\
&\quad  =\lim_{\epsk}\epsk\int_{0}^{\infty}\px\uek(t,0)\phi(t,0) dt.
\end{align*}
Sending $\nu\to\infty$, we get
\begin{equation*}
\lim_{\epsk}\epsk\int_{0}^{\infty}\px\uek(t,0)\phi(t,0) dt= \int_{0}^{\infty}\left(\frac{g^2(t)}{2}- \frac{(u^\tau_0(t))^2}{2}\right)\phi(t,0) dt.
\end{equation*}
Therefore, due to the strong convergence of $g_{\epsk}$ and the continuity of $g$ we have
\begin{align*}
&\lim_{\epsk}\epsk\int_{0}^{\infty}\px\vert \uek(t,0)-c\vert\phi(t,0)dt\\
&\qquad = \lim_{\epsk} \int_{0}^{\infty}\px\uek(t,0)\sgn{\uek(t,0)-c}\phi(t,0)dt\\
&\qquad = \lim_{\epsk} \int_{0}^{\infty}\px\uek(t,0)\sgn{g_{\epsk}(t)-c}\phi(t,0)dt\\
&\qquad = \int_{0}^{\infty}\sgn{g(t)-c}\left(\frac{g^2(t)}{2}- \frac{(u^\tau_0(t))^2}{2}\right)\phi(t,0) dt,
\end{align*}
that is \eqref{eq:ux-in-0}.
\end{proof}

\begin{proof}[Proof of Theorem \ref{th:main}]
Lemma \eqref{lm:conv-u} gives the existence of entropy  solution $u(t,x)$ of \eqref{eq:OHw-u}, or
equivalently \eqref{eq:OHw}.

Let us show that $u(t,x)$ is unique, and that \eqref{eq:stability} holds true. Fixed $T>0$, since our solutions are bounded in $L^{\infty}((0,T)\times\R)$, we use the doubling of variables method.

Let $u,v\in L^{\infty}((0,T)\times\R)$ be two entropy solutions of \eqref{eq:OHw-u}, or equivalently of \eqref{eq:OHw}. By arguing as in \cite{BRN, CdK, dR, k}, using the fact that the two
solutions satisfy the same boundary conditions, we  prove that
\begin{equation}
\label{eq:DB}
\pt(\vert u - v \vert ) + \px\left(\frac {u^2}{2} - \frac{v^2}{2}\right)\sgn{u-v}) - \gamma\sgn{u-v} (P_{u}- P_{v})\leq 0
\end{equation}
holds in sense of distributions in $(0,\infty)\times(0,\infty)$, where
\begin{equation}
\label{eq:defdiP}
P_{u}(t,x)=\int_{0}^x u(t,y) dy, \quad P_{v}=\int_{0}^{x} v(t,y) dy.
\end{equation}
Let $\phi(t,\tau,x,y)\in C^{\infty}(\R^4) $ be a non-negative
test function such that $supp(\phi)\subset (0,\infty)^4$.
Since $u,v$ are entropy solutions of \eqref{eq:OHw-u}, we have
\begin{equation}
\label{eq:entropic12}
\begin{split}
\int_{0}^{\infty}\!\!\!\!\int_{0}^{\infty}[\vert u(t,x) &- v(\tau,y)\vert\pt\phi(t,\tau,x,y)\\
&+\left(\frac{u^2(t,x)}{2}-\frac{v^2(\tau,y)}{2}\right)\sgn{u(t,x)-v(\tau,y)}\cdot\\
&\cdot\px\phi(t,\tau,x,y)\\
&+\gamma\sgn{u(t,x)-v(\tau,y)}P_{u}(t,x)\phi(t,\tau,x,y)]dtdx\geq 0,\\
\end{split}
\end{equation}
\begin{equation}
\label{eq:entropic13}
\begin{split}
\int_{0}^{\infty}\!\!\!\!\int_{0}^{\infty}[\vert v(\tau,y) &- u(t,x)\vert\pT\phi(t,\tau,x,y)\\
&+\left(\frac{v^2(\tau,y)}{2}-\frac{u^2(x,t)}{2}\right)\sgn{v(\tau,y)-u(t,x)}\cdot\\
&\cdot\py\phi(t,\tau,x,y)\\
&+ \gamma\sgn{v(\tau,y)-u(t,x)}P_{v}(\tau,y)\phi(t,\tau,x,y)]\dtau dy\geq 0.
\end{split}
\end{equation}
Integrating \eqref{eq:entropic12} with respect to $\tau,y$,
\eqref{eq:entropic13} with respect to $t,x$, and adding these two
results, we obtain
\begin{equation*}
\begin{split}
\int_{0}^{\infty}\!\!\!\!\int_{0}^{\infty}\!\!\!\!\int_{0}^{\infty}\!\!\!\!\int_{0}^{\infty}[\vert u(t,x)&-v(\tau,y)\vert(\pt\phi(t,\tau,x,y)+\pT\phi(t,\tau,x,y))\\
&+\left(\frac{u^2(t,x)}{2}-\frac{v^2(\tau,y)}{2}\right)\sgn{u(x,t)-v(\tau,y)}\cdot\\
&\cdot(\px\phi(t,\tau,x,y)+\py\phi(t,\tau,x,y))\\
&+\gamma\sgn{u(t,x)-v(\tau,y)}(P_{u}(t,x)-P_{v}(\tau,y))\\
&\cdot\phi(t,\tau,x,y)] dt\dtau dx dy \geq 0.
\end{split}
\end{equation*}
Now, we choose a sequence of functions $\{\delta_{h}\}_{h\ge 1}$,
approximating the Dirac mass at the origin. More precisely, let
$\delta : \R\to [0,1]$ be a $ C^{\infty}$ function such that
\begin{equation*}
\int_{\R} \delta(z)dz=1,\quad  \delta (z)=0, \quad \textrm{for all}
\quad z\notin [-1,1],
\end{equation*}
and define
\begin{equation}
\label{eq:alpha1}
\delta_{h}(z)=h\delta (hz), \quad \alpha_{h}(z)=\int_{-\infty}^{z} \delta_{h}(\theta)\dtheta.
\end{equation}
Let us consider the following test function
\begin{equation}
\label{eq:phi2}
\phi_{h}(t,\tau,x,y)=\psi\Big(\frac{t+\tau}{2},\frac{x+y}{2}\Big)\delta_{h}\Big(\frac{\tau - t}{2}\Big)\delta_{h}\Big(\frac{y - x}{2}\Big),
\end{equation}
where $\psi\in C^{\infty}(\R^2) $ is a
non-negative test function such that $supp(\psi)\subset(0,\infty)^2$.\\
Using \eqref{eq:phi2} as test function in the previous inequality, we have
\begin{align*}
\int_{0}^{\infty}&\!\!\!\!\int_{0}^{\infty}\!\!\!\!\int_{0}^{\infty}\!\!\!\!\int_{0}^{\infty}\Big\{\delta_{h}\Big(\frac{\tau - t}{2}\Big)\delta_{h}\Big(\frac{y - x}{2}\Big)\Big[\vert u(t,x) -
v(\tau,y)\vert\pt\psi\Big(\frac{t+\tau}{2},\frac{x+y}{2}\Big)\\
&+ \left(\frac{u^2(t,x)}{2}-\frac{v^2(\tau,y)}{2}\right)\sgn{u(t,x)-v(\tau,y)}\px\psi\Big(\frac{t+\tau}{2},\frac{x+y}{2}\Big)\Big]\\
&+\gamma\psi\Big(\frac{t+\tau}{2},\frac{x+y}{2}\Big)\delta_{h}\Big(\frac{\tau- t}{2}\Big)\delta_{h}\Big(\frac{y - x}{2}\Big)\cdot\\
&\cdot\sgn{u(t,x)-v(\tau,y)}(P_{u}(t,x)-P_{v}(\tau,y))\Big\}dt\dtau dx dy \geq 0.
\end{align*}
We observe that $\delta_{h} \to \delta_{0}$ when $h\to 0$, where $\delta_{0}$
is Dirac mass centered in $\{0\}$. Therefore, since the maps $t \to u(t,\cdot)$, $t \to
v(t,\cdot)$ are continuous from $[0,\infty)$ into $L^{1}_{loc}(0,\infty)$,
and $t \to P_{u}(t,\cdot)$, $t \to P_{v}(t,\cdot)$ are continuous from
$[0,\infty)$ into $L^{\infty}_{loc}(0,\infty)$, it follows from  the previous inequality
that
\begin{equation}
\label{eq:1000}
\begin{split}
\int_{0}^{\infty}\!\!\!\!\int_{0}^{\infty}(\vert u -v\vert\pt\psi &+ \left(\frac{u^2}{2}-\frac{v^2}{2}\right)\sgn{u-v})\px\psi dt dx\\
& + \gamma\int_{0}^{\infty}\!\!\!\!\int_{0}^{\infty}\sgn{u-v}(P_{u}-P_{v})\psi dtdx \ge
0,
\end{split}
\end{equation}
that is \eqref{eq:DB}.

Let us show that \eqref{eq:stability} holds true.
Since $u$ is an entropy solution of \eqref{eq:OHw-u}, then it satisfies the inequality
\eqref{eq:ent2}.
We write the boundary condition in this way (see \cite{BRN}):
\begin{equation}
\label{eq:bard1}
\min_{c\in I(u^\tau_0(t),g(t))}\Big\{\sgn{u^\tau_0(t)-g(t)}\Big(\frac{(u^\tau_0(t))^2}{2}-\frac{c^2}{2}\Big)\Big\}=0,
\end{equation}
where $I(u^\tau_0(t),g(t))$ is the closed interval
$[\min\{u^\tau_0(t),g(t)\},\max\{u^\tau_0(t),g(t)\}]$.

Let us consider, now, the following product:
\begin{equation}
\label{eq:bard12}
\Big(\frac{(u^\tau_0(t))^2}{2}-\frac{c^2}{2}\Big)(\sgn{u^\tau_0(t)-c}+\sgn{c}), \quad c\in\R.
\end{equation}
We observe that \eqref{eq:bard12} is positive if $c\notin I(u^\tau_0(t),g(t))$. Instead, if we consider  $c\in I(u^\tau_0(t),g(t))$, \eqref{eq:bard12} coincides with \eqref{eq:bard1}. Therefore, for each $c\in\R$, we have that
\begin{equation}
\label{eq:bard2}
\Big(\frac{(u^\tau_0(t))^2}{2}-\frac{c^2}{2}\Big)(\sgn{u^\tau_0(t)-c}+\sgn{c})\ge 0.
\end{equation}
Since \eqref{eq:DB} holds in the sense of distributions in $(0,\infty)^2$, we have that
\begin{equation}
\label{eq:bard3}
\begin{split}
\int_{0}^{\infty}\!\!\!\!\int_{0}^{\infty}(\vert u -v\vert\pt\psi &+ \left(\frac{u^2}{2}-\frac{v^2}{2}\right)\sgn{u-v})\px\psi)dt dx\\
& + \gamma\int_{0}^{\infty}\!\!\!\!\int_{0}^{\infty}\sgn{u-v}(P_{u}-P_{v})\psi dtdx\\
\ge & \int_{0}^{\infty}\sgn{u^\tau_0(t)-v^\tau_0(t)}\cdot\\
&\cdot\left(\frac{(u^\tau_0(t))^2}{2}-\frac{(v^\tau_0(t))^2}{2}\right)\psi(t,0)dt,
\end{split}
\end{equation}
where $\psi\in C^{\infty}(\R^2)$ is a non-negative test
function with compact support, and $v^\tau_0(t)$ is the trace of $v$ at $x=0$.\\
To determine the sign of the right-hand side of \eqref{eq:bard3}, for each $t>0$, we
define the real number $c(t)$ in the following way:
\begin{equation}
\label{eq:bard4}
c(t) = \left\{ \begin{array}{ll}
u^\tau_0(t) &\textrm{if}\quad   u^\tau_0(t)\in I(g(t),v^\tau_0(t)),\\
g(t) &\textrm{if}\quad  g(t) \in I(v^\tau_0(t),u^\tau_0(t)),\\
v^\tau_0(t) &\textrm{if}\quad  v^\tau_0(t)\in I(u^\tau_0(t),g(t)).
\end{array} \right.
\end{equation}
From \eqref{eq:bard4}, it follows that
\begin{align*}
&\sgn{u^\tau_0(t)-v^\tau_0(t)}\left(\frac{(u^\tau_0(t))^2}{2}-\frac{(v^\tau_0(t))^2}{2}\right)\\
&\quad \quad \quad =\sgn{u^\tau_0(t)-c(t)}\left(\frac{(u^\tau_0(t))^2}{2}-\frac{c^2(t)}{2}\right)\\
&\quad \quad \quad\quad +\sgn{v^\tau_0(t)-c(t)}\left(\frac{(v^\tau_0(t))^2}{2}-\frac{c^2(t)}{2}\right).
\end{align*}
For \eqref{eq:bard2}, we get that the right-hand side of
\eqref{eq:bard3} is non negative. Therefore, we have \eqref{eq:1000}.

Let $T, R >0$, and let us consider the sets
\begin{equation}
\label{eq:deftrap}
\begin{split}
\Omega&:=\{(t,x)\in [0,T]\times [-R,R];\quad 0\leq s\leq t,\quad \vert x \vert\leq R + C(T)(t-s)\},\\
\Omega^+&:=\Omega\cap (0,\infty)^2,
\end{split}
\end{equation}
where
\begin{equation}
C(T)=\sup_{(0,T)\times\R}\Big\{\vert u\vert + \vert v\vert\Big\}.
\end{equation}
We define the following test function
\begin{equation*}
\label{eq:trap}
\phi_{h}(t,x)=[\alpha_{h}(s)-\alpha_{h}(s-t)][1-\alpha_{h}(\vert x \vert
-R+C(T)(t-s))]\ge 0,
\end{equation*}
where $\alpha_{h}(z)$ is defined in \eqref{eq:alpha1}.

We observe that the function $[\alpha_{h}(s)-\alpha_{h}(s-t)][1-\alpha_{h}(\vert x \vert
-R+C(T)(t-s))]$ is an approximation of the
characteristic function of $\Omega$.
Moreover, since $u$ and $v$ are in $L^{\infty}((0,T)\times\R)$, we have
that
\begin{equation}
\label{eq:f1}
\Big\vert \frac{u^2(t,x)}{2}- \frac{v^2(t,x)}{2}\Big\vert \leq C(T) \vert u(t,x) - v(t,x)\vert,
\quad (t,x)\in \Omega^+.
\end{equation}
From \eqref{eq:alpha1}, $\alpha' _{h}= \delta_{h}\ge 0$. Using
$\phi_{h}$ as test function in \eqref{eq:1000}, we have
\begin{equation*}
\begin{split}
\int_{0}^{\infty}\!\!\!\!\int_{0}^{\infty}\{\vert u &- v\vert(\delta_{h}(s)-\delta_{h}(s-t))[1-\alpha_{h}(\vert x \vert - R+C(T)(t-s))] \\
&+(\alpha_{h}(s) - \alpha_{h}(s-t))\delta_{h}(\vert x \vert-R+C(T)(t-s))\cdot\\
&\cdot[\sgn{u-v}\left(\frac{u^2}{2}-\frac{v^2}{2}\right)\sgn{x})-C(T)\vert u - v\vert]\\
&-\gamma\sgn{u-v}(P_{u}-P_{v})\cdot\\
&\cdot[\alpha_{h}(s)-\alpha_{h}(s-t)][1-\alpha_{h}(\vert x\vert-R+C(T)(t-s))]\}dsdx\ge 0.
\end{split}
\end{equation*}
Therefore, it follows from \eqref{eq:f1} and the previous inequality that
\begin{align*}
\int_{0}^{\infty}\!\!\!\!\int_{0}^{\infty}[\vert u &- v\vert (\delta_{h}(s)-\delta_{h}(s-t))[1-\alpha_{h}(\vert x \vert - R+C(T)(t-s))]\\
&-\gamma\sgn{u-v}(P_{u}-P_{v})\\
&\cdot[\alpha_{h}(s)-\alpha_{h}(s-t)][1-\alpha_{h}(\vert x\vert-R+C(T)(t-s))]]dsdx\\
\ge \int_{0}^{\infty}\!\!\!\!\int_{0}^{\infty}&(\alpha_{h}(s) - \alpha_{h}(s-t))\delta_{h}(\vert x \vert-R+C(T)(t-s))\cdot\\
& \cdot(C(T)\vert u - v\vert-\sgn{u-v}\left(\frac{u^2}{2}-\frac{v^2}{2}\right)\sgn{x}))dsdx\ge 0.
\end{align*}
Since
\begin{align*}
\delta_{h}&\to \delta_{0},\\
\alpha_{h}(\vert \cdot\vert -R+C(T)t)&\to\chi_{[-R-C(T)(t-s), R+C(T)(t-s)]},\\
[\alpha_{h}(s)-\alpha_{h}(s-t)][1-\alpha_{h}(\vert x\vert-R+C(T)(t-s))]&\to\chi_{\Omega},
\end{align*}
when $h\to 0$, where $\delta_{0}$ is Dirac mass, the continuity of $u(t,\cdot), v(t,\cdot)$ from $[0,\infty)$ into
$L^{1}_{loc}(0,\infty)$,  the continuity of $ P_{u}(t,\cdot), P_{v}(t,\cdot)$
from $[0,\infty)$ into $L^{\infty}_{loc}(0,\infty)$, and the previous inequality  give
\begin{equation}
\label{eq:1001}
\begin{split}
\norm{u(t,\cdot)-v(t,\cdot)}_{L^{1}(0,R)}\leq&\norm{u_{0}-v_{0}}_{L^{1}(0,R+C(T)t)}\\
&+\gamma\int_{\Omega^+}\sgn{u-v}(P_{u}-P_{v})dsdx\\
\leq &\norm{u_{0}-v_{0}}_{L^{1}(0,R+C(T)t)}\\
&+\gamma\int_{0}^{t}\!\!\int_{I(s)}\sgn{u-v}(P_{u}-P_{v})dsdx,
\end{split}
\end{equation}
where
\begin{equation}
\label{eq:Is}
I(s)=[0,R+C(T)(t-s)].
\end{equation}
In particular, we have
\begin{equation}
\label{eq:It}
I(t)=[0,R],\quad I(0)=[0,R+C(T)t].
\end{equation}
Therefore, it follows from \eqref{eq:1001} that
\begin{equation}
\label{eq:-1002}
\begin{split}
\norm{u(t,\cdot)-v(t,\cdot)}_{I(t)}&\leq\norm{u_{0}-v_{0}}_{I(0)}\\
+&\gamma\int_{0}^{t}\!\!\int_{I(s)}\sgn{u-v}(P_{u}-P_{v})dsdx.
\end{split}
\end{equation}
We observe that, for \eqref{eq:defdiP},
\begin{equation}
\label{eq:-1003}
\begin{split}
&\gamma\int_{0}^{t}\!\!\!\int_{I(s)}\sgn{u-v}(P_{u}-P_{v})dsdx\\
&\qquad  \le \gamma\int_{0}^{t}\!\!\!\int_{I(s)}\vert P_{u}-P_{v}\vert dsdx\\
&\qquad  \le \gamma\int_{0}^{t}\!\!\!\int_{I(s)}\Big(\Big\vert\int_{0}^{x}\vert u-v\vert dy\Big\vert\Big)dsdx\\
&\qquad  \le  \gamma\int_{0}^{t}\!\!\!\int_{I(s)}\Big(\Big\vert\int_{I(s)}\vert u-v\vert dy\Big\vert\Big)dsdx\\
&\qquad  = \gamma\int_{0}^{t} \vert I(s)\vert\norm{u(s,\cdot)-v(s,\cdot)}_{L^{1}(I(s))}ds.
\end{split}
\end{equation}
Thanks to \eqref{eq:Is}, we have
\begin{equation}
\label{eq:1005}
\vert I(s) \vert=R+C(T)(t-s)\le R+C(T)t\le R+C(T).
\end{equation}
Let us consider the following continuous function:
\begin{equation}
\label{eq:defG}
G(t)=\norm{u(t,\cdot)-v(t,\cdot)}_{L^{1}(I(t))}, \quad t\ge 0.
\end{equation}
Therefore, it follows from \eqref{eq:-1002}, \eqref{eq:-1003}, and
\eqref{eq:1005} that
\begin{equation*}
G(t)\le G(0)+C(T)\int_{0}^{t}G(s)ds.
\end{equation*}
Gronwall's Lemma, \eqref{eq:It}, and  \eqref{eq:defG} give
\begin{equation*}
\norm{u(t,\cdot)-v(t,\cdot)}_{L^{1}(0,R)}\le e^{C(T)t}\norm{u_{0}-v_{0}}_{L^{1}(0,R+C(T)t)},
\end{equation*}
that is  \eqref{eq:stability}.
\end{proof}

\end{document}